\numberwithin{equation}{section}
\theoremstyle{plain}
\newtheorem{theorem}{Theorem}[section]
\newtheorem{lemma}[theorem]{Lemma}
\newtheorem{proposition}[theorem]{Proposition}
\newtheorem{remark}[theorem]{Remark}
\newcommand{\beq}{\begin{equation}}
\newcommand{\eeq}{\end{equation}}
\newcommand{\beqs}{\begin{eqnarray*}}
\newcommand{\eeqs}{\end{eqnarray*}}
\newcommand{\beqn}{\begin{eqnarray}}
\newcommand{\eeqn}{\end{eqnarray}}
\newcommand{\beqa}{\begin{array}}
\newcommand{\eeqa}{\end{array}}
\def\phi{\varphi}
\begin{document}
\title[Fully nonlinear equations of Krylov type]{Fully nonlinear equations of Krylov type on Riemannian manifolds
with negative curvature}

\author{Li Chen}
\address{Faculty of Mathematics and Statistics, Hubei Key Laboratory of Applied Mathematics, Hubei University,  Wuhan 430062, P.R. China}
\email{chenli@hubu.edu.cn}

\author{Yan He$^\ast$}
\address{Faculty of Mathematics and Statistics, Hubei Key Laboratory of Applied Mathematics, Hubei University,  Wuhan 430062, P.R. China}
\email{helenaig@hotmail.com}

\keywords{the modified Schouten tensor; negative curvature; Hessian
type equation.}

\subjclass[2010]{Primary 35J96, 52A39; Secondary 53A05.}

\thanks{$\ast$ Corresponding author}

\begin{abstract}
In this paper, we consider fully nonlinear equations of Krylov type
on Riemannian manifolds with negative curvature which naturally
arise in conformal geometry. Moreover, we prove the a priori
estimates for solutions to these equations and establish the
existence results. Our results can be viewed as an extension of
previous results given by Gursky-Viaclovsky and Li-Sheng.
\end{abstract}

\maketitle

\section{Introduction}

Let $(M, g_0)$ be a smooth closed Riemannian manifold of dimension
$n\ge3$ and  $[g_0]$ denote the conformal class of $g_0$ on $M$, the
well-known $\sigma_k$-Yamabe problem is of finding a metric $g \in
[g_0]$ satisfies the following equation on $M$
\begin{equation}
\sigma_k(A_g)=constant,\label{eq000}
\end{equation}
where
\begin{eqnarray*}
A_{g}={\frac{1}{{n-2}}} \left( {Ric}_{g}-
{\frac{{{R}_{g}}}{{2(n-1)}}}g \right)
\end{eqnarray*}
is the Schouten tensor of $g$, ${Ric}_{g}$ and ${R}_{g}$ are the
Ricci and scalar curvatures of $g$ respectively, we denote by
$\sigma_k(\lambda)$ the $k$-th elementary symmetric polynomial
$$\sigma_k(\lambda)=\sum_{1\le i_1<\cdots<i_k\le n}\lambda_{i_1}\cdots\lambda_{i_k},
 \quad \lambda=(\lambda_1, \lambda_2, ..., \lambda_n)\in \mathbb{R}^n,$$
for $1\leq k\leq n$ and we set $\sigma_0(\lambda)=1$, and
$\sigma_k({A_{g}})$ means that $k$-th elementary symmetric
polynomial $\sigma_k$ is applied to the eigenvalues of $g^{-1}\cdot
{A_g}$.

For $k=1$, the equation \eqref{eq000} is just the classical Yamabe
equation which has been solved by Yamabe \cite{Y60}, Trudinger
\cite{Tr68} , Aubin \cite{Au76} and Schoen \cite{S84}. The fully
nonlinear elliptic equation \eqref{eq000} ($k\geq 2$) has been
studied extensively after the pioneering works of Viaclovsky
\cite{V00, V00-1, V02}. Under the assumption that the eigenvalues
$\lambda(A_{g_0})$ of the matrix $g^{-1}\cdot {A_{g_0}}$ belong to
$\Gamma_k$ with
$$\Gamma _{k}=\{\lambda =(\lambda _{1},\dots ,\lambda _{n})\in
\mathbb{R}^{n}|\quad \sigma _{j}\left( \lambda \right) >0, \ \forall
1\leq j\leq k\},$$ the $\sigma_k$-Yamabe equation \eqref{eq000} has
been solved for either $k=2$, or $k\geq \frac{n}{2}$, or $M$ being
locally conformally flat by the works of Chang-Gursky-Yang
\cite{CGY02a, CGY02b}, Guan-Wang \cite{GW03b}, Li-Li \cite{LL03},
Gursky and Viaclovsky \cite{GV07}, Li-Nguyen\cite{LN14}, Ge-Wang
\cite{GW06}, Sheng-Trudinger-Wang \cite{STW07} and
Brendle-Viaclovsky \cite{BV04}. For related results, see \cite{BM09,
Chen05, Chen07, Chen09, E92, GW03a, GW03b, GW13, GLW10, GW07, HSh11,
HSh13, JLL07, JT17,JT18,JT19, KMS09, LL06, LL03, STW07, W06, ShY13}
and so on.

Now, we turn into the negative curvature case. Gursky-Viaclovsky
\cite{GV03a} introduced the modified Schouten tensor for $\tau \in
\mathbb{R}$
\begin{eqnarray*}
A^{\tau}_{g}={\frac{1}{{n-2}}} \left( {Ric}_{g}-
{\frac{{\tau{R}_{g}}}{{2(n-1)}}}g \right).
\end{eqnarray*}
When $\tau=1$, $A^{1}_{g}$ is just the Schouten tensor $A_g$. Let
$\lambda(-A^{\tau}_{g_0})$ be eigenvalues of the matrix
$g_{0}^{-1}\cdot (-{A^{\tau}_{g_0}})$ and
$\sigma_k(-A^{\tau}_{g})=\sigma_k(\lambda(-A^{\tau}_{g}))$.
Gursky-Viaclovsky \cite{GV03a} proved that for $\tau<1$ and
$\lambda(-A^{\tau}_{g_0}) \in \Gamma_k$, there exists a unique
conformal metric $g\in [g_0]$ satisfying
\begin{equation}\label{A-0}
\sigma_k(-{A^{\tau}_{g}})=f(x)
\end{equation}
for any smooth positive function $f(x)$ on $M$. A parabolic proof
was later given by Li-Sheng \cite{LiSh05}. Since the equation
\eqref{A-0} is not necessarily elliptic for $\tau>1$ and the $C^2$
estimate does not work for the equation \eqref{A-0} with $\tau=1$ as
noted previously in \cite{V00-1}, the restriction $\tau<1$ must be
made in \cite{GV03a} and \cite{LiSh05}. Those works motivated the
later research on the equation \eqref{A-0} with $\tau$ the boundary
conditions \cite{GSW11,G07,G08}. See \cite{ShZ07} for related
research and \cite{Sui17} for the recent progress on noncompact
manifolds.

In this paper, we study an extension of the equation \eqref{A-0}.
Let $(M, g_0)$ be a smooth closed Riemannian manifold of dimension
$n\ge3$ and $[g_0]$ denote the conformal class of $g_0$ on $M$, we
want to find a metric $g \in [g_0]$ satisfies the following equation
on $M$
\begin{equation}\label{proric}
\sigma_k(-{A^{\tau}_{g}})+\alpha(x)\sigma_{k-1}(-{A^{\tau}_{g}})=\sum_{l=0}^{k-2}\alpha_l(x)\sigma_l(-{A^{\tau}_{g}}),
\quad 3\leq k\leq n.
\end{equation}

The following is our main theorem.

\begin{theorem}\label{main2}
Assume that $\tau<1$ and  $\lambda(-A^{\tau}_{g_0}) \in \Gamma_k$.
Let $\alpha_l(x)$ with $0\leq l\leq k-2$ and $\alpha(x)$ be smooth
functions on $M$. Then there exists a conformal metric $g \in [g_0]$
satisfies equation (\ref{proric}) if  $\alpha_l(x)>0$ for all $0\leq
l\leq k-2$ and $x \in M$.
\end{theorem}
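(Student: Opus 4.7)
The plan is to use the continuity method. Writing $g = e^{-2u}g_0$ and applying the standard transformation formula for the modified Schouten tensor, one recasts equation \eqref{proric} as a fully nonlinear PDE for $u$. I would introduce the one-parameter family
\[
\sigma_k(-A^\tau_{g_t}) + t\,\alpha(x)\,\sigma_{k-1}(-A^\tau_{g_t}) = t\sum_{l=0}^{k-2}\alpha_l(x)\,\sigma_l(-A^\tau_{g_t}) + (1-t)\,\sigma_k(-A^\tau_{g_0}),
\]
so that $t=0$ is solved trivially by $u\equiv 0$ (and more generally reduces to the Gursky--Viaclovsky equation $\sigma_k(-A^\tau_g)=\psi(x)$ solved in \cite{GV03a,LiSh05}), while $t=1$ recovers \eqref{proric}. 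Let $I\subset[0,1]$ be the set of $t$ admitting a solution $u_t$ with $\lambda(-A^\tau_{g_t})\in \Gamma_k$. Since $0\in I$, it suffices to show $I$ is open and closed.

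Openness follows by linearizing at $u_t$. Each $\sigma_j$ is elliptic at a $\Gamma_k$-point, and after moving the $\sigma_l$ terms to the left the combined operator $\sigma_k+t\alpha\sigma_{k-1}-t\sum_{l\le k-2}\alpha_l\sigma_l$ remains elliptic because, under the sign hypothesis $\alpha_l>0$, the contributions to the linearization from $-\alpha_l\sigma_l$ enter with a favorable sign in $\Gamma_k$. The zeroth order coefficient arising from the explicit dependence of the transformed Schouten tensor on $u$ via the conformal factor $e^{-2u}$ carries a definite sign thanks to $\tau<1$ and the positivity of $\alpha_l$, so the Fredholm alternative applied to the linearized equation produces the required variation.

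Closedness reduces to uniform $C^0$, $C^1$, $C^2$ a priori estimates along the family; $C^{2,\alpha}$ then follows from the Krylov/Evans--Krylov theory, applied after recasting the Krylov-type combination on the left-hand side in concave form (the source of the terminology in the title). The $C^0$ bound comes from evaluating the equation at the extrema of $u$: at $\max u$ one has $\nabla u=0$ and $\nabla^2 u\le 0$, and balancing $\sigma_k$ against the positive lower order terms under the hypotheses $\tau<1$ and $\alpha_l>0$ forces an upper bound; an analogous argument at $\min u$ gives the lower bound. The $C^1$ estimate is a standard Bernstein-type maximum principle on $|\nabla u|^2$.

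I expect the main obstacle to be the $C^2$ estimate. The standard approach is a maximum principle applied to a test function built from the largest eigenvalue of $-A^\tau_g$ (plus auxiliary terms of the form $e^{N u}$ or $|\nabla u|^2$). The additional terms $\alpha(x)\sigma_{k-1}(-A^\tau_g)$ and $\sum_{l=0}^{k-2}\alpha_l(x)\sigma_l(-A^\tau_g)$, which are absent in \cite{GV03a,LiSh05}, generate commutator contributions when one differentiates twice and commutes covariant derivatives; controlling their sign requires careful use of the Newton--MacLaurin inequalities among the $\sigma_j$ restricted to $\Gamma_k$, together with the positivity $\alpha_l>0$ to absorb bad terms, in the spirit of the scalar Krylov-type arguments adapted here to the conformal setting with $\tau<1$.
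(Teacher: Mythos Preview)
Your overall architecture (homotopy plus $C^0$, $C^1$, $C^2$ a priori estimates, then Evans--Krylov) matches the paper. However, there are two substantive discrepancies, and one of them is a genuine gap.

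\textbf{The admissible cone and ellipticity.} You run the argument in $\Gamma_k$; the paper (following the key observation of Guan--Zhang \cite{GZ19}) works in $\Gamma_{k-1}$. This matters precisely because $\alpha(x)$ carries no sign hypothesis. Your claim that ``the contributions to the linearization from $-\alpha_l\sigma_l$ enter with a favorable sign in $\Gamma_k$'' is incorrect: $\partial(-\alpha_l\sigma_l)/\partial\lambda_i=-\alpha_l\,\sigma_{l-1}(\lambda|i)<0$ in $\Gamma_k$, and likewise $t\alpha\,\partial\sigma_{k-1}/\partial\lambda_i$ has no sign. Ellipticity of the bare operator $\sigma_k+t\alpha\sigma_{k-1}-t\sum\alpha_l\sigma_l$ is therefore not clear in $\Gamma_k$. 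The paper instead rewrites the equation in quotient form
\[
G(U)=\frac{\sigma_k(U)}{\sigma_{k-1}(U)}-\sum_{l=0}^{k-2}\beta_l(x,u)\,\frac{\sigma_l(U)}{\sigma_{k-1}(U)}=-t\alpha(x)e^{2u},
\]
and uses that $\sigma_k/\sigma_{k-1}$ and each $-\sigma_l/\sigma_{k-1}$ (for $l\le k-2$) are elliptic and concave on $\Gamma_{k-1}$. This reformulation is what gives both ellipticity and the concavity you need for Evans--Krylov; you allude to ``recasting in concave form'' but do not identify this step. Moreover, for closedness you would need to show that solutions stay uniformly inside $\Gamma_k$, i.e.\ $\sigma_k(U^t)\ge c>0$; the equation does not give this when $\alpha>0$. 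By contrast, the paper shows $\sigma_{k-1}(U^t)\ge c>0$ directly from Lemma~\ref{lem2.6}, which suffices for $\Gamma_{k-1}$.

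\textbf{Degree theory versus continuity.} The paper does not use the continuity method. It sets up a homotopy whose $t=0$ endpoint is an explicit equation with $u\equiv 0$ as its \emph{unique} solution (Lemma~\ref{t=0}), checks that the linearization there is invertible, and concludes via Li's degree theory \cite{Li89} that the degree is $\pm1$ for all $t$. This sidesteps the need to verify invertibility of the linearization along the whole path, which your openness argument would require and which you have not fully justified (the zeroth-order sign argument you sketch is plausible at $t=0$ but not obviously uniform in $t$). Your choice of homotopy endpoint (the Gursky--Viaclovsky equation) also differs from the paper's, which deforms both the coefficients and the background tensor $-tA^\tau_{g_0}+(1-t)g_0$ simultaneously.

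The $C^0$, $C^1$, $C^2$ estimates you outline are broadly in the right spirit; note that the paper's $C^2$ test function is $\Delta u+\mu|\nabla u|^2$ rather than the largest eigenvalue, and the crucial input at each stage is the uniform bound $0<\sigma_l/\sigma_{k-1}\le C$ of Lemma~\ref{lem2.6}, which again rests on the $\Gamma_{k-1}$ framework.
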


\begin{remark}
We proved the existence of solutions to the equation \eqref{proric}
in Theorem \ref{main2} without the sign requirement for the
coefficient function $\alpha(x)$ in the equation \eqref{proric}.
This type of the equation was first considered by Guan-Zhang
\cite{GZ19}.
\end{remark}

\begin{remark}
In fact, Theorem \ref{main2} holds for $\alpha_l(x)$ with $0\leq
l\leq k-2$ satisfying either $\alpha_l(x)>0$ for all $x \in M$, or
$\alpha_l\equiv 0$, but at least one of $\alpha_l(x)$ with $0\leq
l\leq k-2$ is positive on $M$. Thus,  Theorem \ref{main2} recovers
the previous results proved by Gursky-Viaclovsky \cite{GV03a} and
Li-Sheng \cite{LiSh05}.
\end{remark}

When $\alpha_l\equiv 0$ for $1\leq l\leq k-2$ and $\tau=0$, the
equation \eqref{proric} was considered by the authors with Guo
\cite{CGH}. In fact, \eqref{proric} is the equation of\textbf{
Krylov type} which has been introduced and studied by Krylov in
\cite{Kry95} thirty years ago, and can be seen the extension of the
landmark work \cite{CNS84,CNS85} on the Hessian equation
investigated by Caffarelli-Nirenberg-Spruck. In detail, Krylov
studied the general Hessian equation
\begin{eqnarray}\label{Kr}
\sigma_k(D^2u)+\alpha(x)\sigma_{k-1}(D^2u)= \sum_{l=0}^{k-2}
\alpha_l(x)\sigma_l(D^2u), \quad x \in \Omega\subset \mathbb{R}^n.
\end{eqnarray}
In particular, he observed that if $\alpha(x) \leq 0$ and
$\alpha_l(x)\geq 0$ for $0\leq l\leq k-2$, the natural admissible
cone to make the equation \eqref{Kr} elliptic is also the
$\Gamma_k$-cone which is the same as the $k$-Hessian equation case.
Recently, Guan-Zhang \cite{GZ19} studied the equation of Krylov type
in the problem of prescribing convex combination of area measures
\cite{Sch13}
\begin{eqnarray}\label{CM}
\sigma_k(D^2u+uI)+\alpha(x)\sigma_{k-1}(D^2u+uI)=\sum_{l=0}^{k-2}\alpha_l(x)\sigma_l(D^2u+uI)\quad
\mbox{on} \quad \mathbb{S}^n,
\end{eqnarray}
with $\alpha_{l}(x)\geq 0$ for $0\leq l\leq k-2$, but without the
sign requirement for the coefficient function $\alpha(x)$. In this
case, they observed that the proper admissible set of solutions of
the equation \eqref{CM} is $\Gamma_{k-1}$, not $\Gamma_k$. Based on
this important observation, they also studied Krylov's equation
\eqref{Kr} in $\Gamma_{k-1}$. In fact, such type of the equations
with its structure as a combination of elementary symmetric
functions in fact arise naturally from many important geometric
problems, such as the so-called Fu-Yau equation arising from the
study of the Hull-Strominger system in theoretical physics (see
Fu-Yau \cite{Fu07, Fu08} and Phong-Picard-Zhang \cite{Ph17, Ph19,
Ph20}), the special Lagrangian equations introduced by Harvey-Lawson
\cite{HL}, and so on.

The present paper is built up as follows. In Sect. 2 we start with
some background. In Sect. 3, we obtain the a priori estimates.  We
will prove Theorem \ref{main2} in Sect. 4.

\vskip30pt

\section{Preliminaries}

Let $(M, g_0)$ be a smooth closed Riemannian manifold of dimension
$n\ge3$ with Levi-Civita connection $\nabla$. For later convenience,
we first state our conventions on derivative notation. For a $(0,
r)$-tensor field $V$ on $M$, its covariant derivative $\nabla V$ is
a $(0, r+1)$-tensor field whose coordinate expression is denoted by
$$\nabla V=(V_{k_{1}\cdot\cdot\cdot
k_{r} i}).$$ Similarly, the coordinate expression of the second
covariant derivative of $V$  is denoted by
$$\nabla^2 V=(V_{k_{1}\cdot\cdot\cdot
k_{r} ij}),$$ and so on for the higher order covariant derivatives.
Under the conformal transformation ${g}=\exp{(2u)}g_0$, the Ricci
curvature of $g$ is given by the formula (see \cite{GV03a})
\begin{eqnarray*}
-A^{\tau}_{g}=\nabla^2 u +\frac{1-\tau}{n-2} \Delta u g_0 +
\frac{2-\tau}{2}|\nabla u|^2 g_0- du\otimes du -A^{\tau}_{g_0},
\end{eqnarray*}
where (and throughout the paper) $\Delta u$ and $\nabla^2 u$ denote
the Laplacian and Hessian of $u$ with respect to the background
metric $g_0$. Consequently, the proof of Theorem \ref{main2} reduces
to finding a solution $u\in C^{\infty}(M)$ to the partial
differential equation of second order
\begin{eqnarray}\label{u-Eq}
\frac{\sigma_k(U)}{\sigma_{k-1}(U)}-\sum_{l=0}^{k-2}\alpha_l(x)\exp({2(k-l)u})\frac{\sigma_{l}(U)}
{\sigma_{k-1}(U)}=-\alpha(x) \exp({2u}),\label{eq99}
\end{eqnarray}
where
\begin{eqnarray*}
U=\nabla^2 u +\frac{1-\tau}{n-2} \triangle u g_0 +
\frac{2-\tau}{2}|\nabla u|^2 g_0- du\otimes du -A^{\tau}_{g_0},
\end{eqnarray*}
and $\sigma_k({U})$ means that $k$-th elementary symmetric
polynomial $\sigma_k$ is applied to the eigenvalues of
$g_{0}^{-1}\cdot U$. To solve the equation \eqref{u-Eq}, we consider
a family of equations
\begin{eqnarray}\label{t-u-Eq}
G(U^t)&:=&\frac{\sigma_k\big(U^t\big)}
{\sigma_{k-1}\big(U^t\big)}\notag\allowdisplaybreaks-\frac{\sum_{l=0}^{k-2}\Big([(1-t)c+t\alpha_l(x)]\exp[{2(k-l)u}]\sigma_{l}\big(U^t\big)\Big)}
{\sigma_{k-1}\big(U^t\big)}\notag\allowdisplaybreaks\\
&=&-t\alpha(x) \exp({2u}),\label{eq99}
\end{eqnarray}
where  $ t\in [0, 1]$,
$c=\frac{\sigma_{k}(e)}{\sum_{l=0}^{k-2}\sigma_l(e)}$, $e=(1,\cdots,
1)$, and
\[U^t=\nabla^2 u +\frac{1-\tau}{n-2} \triangle u g_0 +
\frac{2-\tau}{2}|\nabla u|^2 g_0- du\otimes du
-tA^{\tau}_{g_0}+(1-t)g_0.\] Here $\sigma_k\big(U^t\big)$ means that
$k$-th elementary symmetric polynomial $\sigma_k$ is applied to the
eigenvalues of $g_{0}^{-1}\cdot U^t$.

Now we denote by $\lambda(U^t)$ the eigenvalues of the matrix
$g^{-1}_{0}\cdot U^t$ throughout the paper. The following
proposition says the proper admissible set for the solutions of
\eqref{t-u-Eq} is $\Gamma_{k-1}$ which was first observed by
Guan-Zhang \cite{GZ19}.

\begin{proposition}\label{ellipticconcave}
Assume $\tau<1$, then the operator
\begin{eqnarray*}
G(U^t)&:=&\frac{\sigma_k\big(U^t\big)}
{\sigma_{k-1}\big(U^t\big)}\notag\allowdisplaybreaks-\frac{\sum_{l=0}^{k-2}\Big([(1-t)c+t\alpha_l(x)]
\exp[{2(k-l)u}]\sigma_{l}\big(U^t\big)\Big)}
{\sigma_{k-1}\big(U^t\big)}\notag\allowdisplaybreaks
\end{eqnarray*}
is elliptic and concave about $u$ if $\lambda(U^t) \in
\Gamma_{k-1}$, and $\alpha_l(x)\in C^\infty(M)$ is nonnegative for
$0\leq l\leq k-2$.
\end{proposition}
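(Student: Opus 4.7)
My plan is to split the proposition into a reduction step and a cone-analytic step. For the reduction, the map $\nabla^2 u\mapsto U^t$ is affine in $\nabla^2 u$ (with $u$, $\nabla u$ frozen) and its principal part is $\nabla^2 u+\tfrac{1-\tau}{n-2}(\Delta u)\,g_0$. Setting $G^{ij}:=\partial G/\partial U^t_{ij}$, the linearization of the operator in $\nabla^2 u$ is therefore
\[
\delta G=\Bigl[G^{ij}+\tfrac{1-\tau}{n-2}\,\mathrm{tr}_{g_0}(G)\,g_0^{ij}\Bigr]\,\delta u_{ij}.
\]
Under $\tau<1$ we have $\tfrac{1-\tau}{n-2}>0$, so once $G^{ij}$ is positive definite with $\mathrm{tr}_{g_0}(G)>0$ this linearization is positive definite (ellipticity in $\nabla^2 u$); and concavity of $G$ in the matrix $U^t$ passes through the affine composition to concavity in $\nabla^2 u$.

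It then remains to show on $\Gamma_{k-1}$ that the eigenvalue function
\[
G(\lambda)=\frac{\sigma_k(\lambda)}{\sigma_{k-1}(\lambda)}-\sum_{l=0}^{k-2}\beta_l\,\frac{\sigma_l(\lambda)}{\sigma_{k-1}(\lambda)},\qquad \beta_l:=\bigl[(1-t)c+t\alpha_l(x)\bigr]\exp[2(k-l)u]\ge 0,
\]
is concave in $\lambda$ and satisfies $\partial_i G>0$ for each $i$. The classical Caffarelli--Nirenberg--Spruck lift then yields $G$ concave in the symmetric matrix $U^t$, and the positivity of the $\partial_i G$ at a point diagonalizing $U^t$ gives positive definiteness of $G^{ij}$. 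The sign $\beta_l\ge 0$ is immediate from $\alpha_l(x)\ge 0$ and $c>0$.

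The heart of the argument is thus a Newton--Maclaurin analysis of $\sigma_k/\sigma_{k-1}$ and $\sigma_l/\sigma_{k-1}$ on the \emph{larger} cone $\Gamma_{k-1}$. For the first term I would use the identity
\[
\partial_i\!\left(\frac{\sigma_k}{\sigma_{k-1}}\right)=\frac{\sigma_{k-1}(\lambda|i)^2-\sigma_{k-2}(\lambda|i)\,\sigma_k(\lambda|i)}{\sigma_{k-1}(\lambda)^2}
\]
combined with Newton's inequality applied to the $(n-1)$-tuple $\lambda|i$ to conclude strict positivity, together with the (known) concavity of $\sigma_k/\sigma_{k-1}$ on $\Gamma_{k-1}$. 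For $0\le l\le k-2$, the standard Newton--Maclaurin inequalities give that $(\sigma_{k-1}/\sigma_l)^{1/(k-1-l)}$ is positive, concave and coordinatewise increasing on $\Gamma_{k-1}$; reciprocating shows $(\sigma_l/\sigma_{k-1})^{1/(k-1-l)}$ is positive, convex and decreasing, and raising to the power $k-1-l\ge 1$ yields $\sigma_l/\sigma_{k-1}$ itself convex with $\partial_i(\sigma_l/\sigma_{k-1})\le 0$. Equivalently, $-\sigma_l/\sigma_{k-1}$ is concave and nondecreasing in each $\lambda_i$.

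Taking a nonnegative combination of these statements gives $G$ concave with $\partial_i G>0$, and the first paragraph then closes the proof. I expect the main obstacle to be correctly extracting the convexity and monotonicity statements on the larger cone $\Gamma_{k-1}$ rather than on the more familiar $\Gamma_k$---in particular, the convexity of $\sigma_l/\sigma_{k-1}$ for $l\le k-2$ on $\Gamma_{k-1}$ is precisely the structural observation of Guan--Zhang that explains why $\Gamma_{k-1}$ (and not $\Gamma_k$) is the natural admissible cone for Krylov-type operators.
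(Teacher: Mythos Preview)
Your proposal is correct and follows exactly the approach the paper intends: the paper omits the proof entirely and simply refers to Proposition~2.2 of Guan--Zhang \cite{GZ19}, whose content is precisely the Newton--Maclaurin analysis you outline (concavity and monotonicity of $\sigma_k/\sigma_{k-1}$ and convexity and antimonotonicity of $\sigma_l/\sigma_{k-1}$ on $\Gamma_{k-1}$), together with the affine reduction in $\nabla^2 u$ that uses $\tau<1$. You have, in effect, written out the argument the paper suppresses; your explicit treatment of the linearization $G^{ij}+\tfrac{1-\tau}{n-2}\mathrm{tr}_{g_0}(G)\,g_0^{ij}$ and of the power-and-reciprocal mechanism for the $\sigma_l/\sigma_{k-1}$ terms matches what the later sections of the paper (e.g.\ \eqref{93} and the references to \cite{HS99}, \cite{Li96}, \cite{Ger06}) rely on.
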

\begin{proof}
The proof is almost the same to that of Proposition 2.2 in
\cite{GZ19}, so we omit it.
\end{proof}

\vskip30pt

\section{The a priori estimates}

\subsection{$C^0$ estimate}

We begin with an important property of $\sigma_k$.

\begin{lemma}\label{A+B}
Let $A$ and $B$ be symmetric $n\times n$ matrices and $0\leq l<k\leq
n$.

(1) Assume that $A$ is positive semi-definite, $B\in \Gamma_{k-1}$ ,
and $A+B \in \Gamma_{k-1}$. Then, we have
\begin{equation*}
\frac{\sigma_k}{\sigma_{k-1}}(A+B)
\geq\frac{\sigma_k}{\sigma_{k-1}}(B)
\end{equation*}
and
\begin{equation*}
\Big(\frac{\sigma_{k-1}}{\sigma_{l}}\Big)^{\frac{1}{k-1-l}}(A+B)
\geq\Big(\frac{\sigma_{k-1}}{\sigma_{l}}\Big)^{\frac{1}{k-1-l}}(B).
\end{equation*}

(2) Assume that $A$ is negative semi-definite, $B\in \Gamma_{k-1}$ ,
and $A+B \in \Gamma_{k-1}$. Then, we have
\begin{equation*}
\frac{\sigma_k}{\sigma_{k-1}}(A+B)
\leq\frac{\sigma_k}{\sigma_{k-1}}(B)
\end{equation*}
and
\begin{equation*}
\Big(\frac{\sigma_{k-1}}{\sigma_{l}}\Big)^{\frac{1}{k-1-l}}(A+B)
\leq\Big(\frac{\sigma_{k-1}}{\sigma_{l}}\Big)^{\frac{1}{k-1-l}}(B).
\end{equation*}
\end{lemma}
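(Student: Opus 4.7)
The plan is to reduce all four inequalities (two in each part) to a single monotonicity statement: if $H$ is any $C^1$ function on the open convex cone $\Gamma_{k-1}$ whose matrix gradient $H'(X)$ is positive definite at every $X\in \Gamma_{k-1}$, and if $A$ is positive semi-definite while $B, A+B\in \Gamma_{k-1}$, then $H(A+B)\geq H(B)$. The two functions to which I would apply this principle are
\[
F(X) := \frac{\sigma_k(X)}{\sigma_{k-1}(X)}, \qquad G_l(X) := \left(\frac{\sigma_{k-1}(X)}{\sigma_l(X)}\right)^{1/(k-1-l)},
\]
viewed as functions of symmetric matrices through the eigenvalue map.

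To establish the monotonicity statement, I would connect $B$ to $A+B$ by the straight segment $X(t) := B + tA = (1-t)B + t(A+B)$, $t\in[0,1]$. Convexity of $\Gamma_{k-1}$ guarantees $X(t)\in \Gamma_{k-1}$ for every $t$, so the fundamental theorem of calculus applies and gives
\[
H(A+B) - H(B) = \int_0^1 \mathrm{tr}\bigl(H'(X(t))\, A\bigr)\, dt.
\]
Since $H'(X(t))$ is positive definite and $A$ is positive semi-definite, the trace of their product is non-negative at every $t$, so the integral is non-negative. Taking $H=F$ and $H=G_l$ then yields both inequalities of part~(1).

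Part~(2) would then drop out of part~(1) by replacing $A$ with $-A$ (now positive semi-definite) and $B$ with $A+B$ (still in $\Gamma_{k-1}$ by hypothesis); the new sum $(-A)+(A+B)=B$ also lies in $\Gamma_{k-1}$, so part~(1) applied with these new symbols yields $H(B)\geq H(A+B)$, exactly the desired reversed inequality.

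The only nontrivial ingredient is the positive definiteness of $F'(X)$ and $G_l'(X)$ on $\Gamma_{k-1}$, and this is where I expect the real work to sit. Since both operators are symmetric in the eigenvalues, after diagonalization it suffices to show that each eigenvalue partial $\partial_i F(\lambda)$ and $\partial_i G_l(\lambda)$ is strictly positive on $\Gamma_{k-1}$. For $F$, writing $\sigma_{j;i}$ for the $j$-th elementary symmetric function in the eigenvalues with $\lambda_i$ removed, a direct algebraic manipulation using $\sigma_{j}=\sigma_{j;i}+\lambda_i\sigma_{j-1;i}$ gives
\[
\partial_i F \;=\; \frac{\sigma_{k-1;i}^{\,2} - \sigma_{k;i}\,\sigma_{k-2;i}}{\sigma_{k-1}^{\,2}},
\]
and the numerator is positive by the Newton inequality applied to the $(n-1)$-tuple $\lambda|_i$ (the sharp constant there being strictly larger than one, with the case $\sigma_{k;i}\sigma_{k-2;i}\leq 0$ being automatic). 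For $G_l$, the positivity of the eigenvalue partials on $\Gamma_{k-1}$ is the classical Newton--MacLaurin fact underlying the ellipticity of normalized Hessian quotients. Both inputs are the same ones that drive Proposition~\ref{ellipticconcave}, so I would invoke them rather than rederive them.
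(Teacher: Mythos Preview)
Your argument is correct. The reduction of part~(2) to part~(1) by substituting $(-A, A+B)$ for $(A,B)$ is exactly what the paper does, so there is no difference there. The difference is in how part~(1) is obtained.

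The paper's proof does not use ellipticity at all. Instead it exploits that $\sigma_k/\sigma_{k-1}$ and $(\sigma_{k-1}/\sigma_l)^{1/(k-1-l)}$ are concave and $1$-homogeneous on $\Gamma_{k-1}$; for any such $H$ one has the super-additivity $H(A+B)\geq H(A)+H(B)$, and since $A$ is positive semi-definite (hence in $\overline{\Gamma_n}\subset\overline{\Gamma_{k-1}}$) the term $H(A)$ is nonnegative. Your route replaces ``concavity $+$ homogeneity'' by ``positivity of the gradient on the cone'' and integrates along the segment $t\mapsto B+tA$. This is a genuinely different mechanism: it uses only the stated hypotheses $B,\,A+B\in\Gamma_{k-1}$ (never needing $A$ itself in the cone), and it would apply equally to any elliptic operator on $\Gamma_{k-1}$ whether or not it is concave or homogeneous. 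The price you pay is the verification of ellipticity, i.e.\ your computation of $\partial_i F$ via $\sigma_{k-1;i}^{\,2}-\sigma_{k;i}\sigma_{k-2;i}$ and the appeal to Newton's inequality; the paper avoids this by citing concavity directly. Both approaches are short and rest on the same circle of Newton--Maclaurin facts, just accessed from different ends.
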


\begin{proof}
Since (2) can be easily proved by applying (1) for matrices $-A$ and
$A+B$, it is sufficient to prove (1). We know from the concavity of
$\frac{\sigma_k}{\sigma_{k-1}}$ in the convex cone $\Gamma_{k-1}$
(see \cite{HS99})
\begin{equation*}
\frac{\sigma_k}{\sigma_{k-1}}(A+B)
\geq\frac{\sigma_k}{\sigma_{k-1}}(A)+\frac{\sigma_k}{\sigma_{k-1}}(B)
\end{equation*}
for $A\in \Gamma_{k-1}$ and $B \in \Gamma_{k-1}$, which implies in
view of the positive semi-definite of $A$
\begin{equation*}
\frac{\sigma_k}{\sigma_{k-1}}(A+B)
\geq\frac{\sigma_k}{\sigma_{k-1}}(B).
\end{equation*}
So, we complete the proof of the first inequality in (1). The second
inequality in (1) can be proved similarly if we notice that
$\Big[\frac{\sigma_{k-1}}{\sigma_{l}}\Big]^{\frac{1}{k-1-l}}$ is
concave in $\Gamma_{k-1}$ for $0\leq l<k-1$ (see Chapter XV in
\cite{Li96}).
\end{proof}

With the help of Lemma \ref{A+B}, $C^0$ estimate can be obtained
consequently.

\begin{lemma}\label{estn0}
Assume that $\tau<1$ and $\lambda(-A^{\tau}_{g_0})\in\Gamma_{k}$.
Let $\alpha_l(x)$ be a positive smooth function on $M$ for all
$0\leq l\leq k-2$ and $\alpha(x)$ be a smooth function on $M$.
Suppose $u$ is a smooth solution of (\ref{eq99}) with $\lambda(U^t)
$$\in\Gamma_{k-1}$. Then there exists a constant $C$ depending on $\tau$,
$g_0$, $||\alpha||_{C^0(M)}$, $||\alpha_l||_{C^0(M)}$ and
$\inf_{M}\alpha_l$ with $0\leq l\leq k-2$, such that
\begin{equation}
\sup_M| u|\le C.\label{C0bdn}
\end{equation}
\end{lemma}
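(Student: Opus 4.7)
The plan is to apply the classical maximum principle to $u$ at its extrema and to combine it with the monotonicity of the Krylov operator recorded in Lemma~\ref{A+B}.

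At a maximum point $x_0\in M$ of $u$ we have $\nabla u(x_0)=0$, $\nabla^2 u(x_0)\leq 0$, and in particular $\Delta u(x_0)\leq 0$. Since $\tau<1$, the matrix
\[
A_0:=\nabla^2 u(x_0)+\frac{1-\tau}{n-2}\Delta u(x_0)\,g_0
\]
is negative semi-definite. Setting $B^t:=-tA^{\tau}_{g_0}+(1-t)g_0$, the hypothesis $\lambda(-A^{\tau}_{g_0})\in\Gamma_k$ together with the convexity of $\Gamma_k$ implies $B^t\in\Gamma_k\subset\Gamma_{k-1}$ uniformly in $(t,x)\in[0,1]\times M$; by compactness of $M$ the quotients $\sigma_k(B^t)/\sigma_{k-1}(B^t)$ and $\sigma_l(B^t)/\sigma_{k-1}(B^t)$ are uniformly bounded above and bounded below by positive constants. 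Since $U^t(x_0)=A_0+B^t(x_0)$ with $A_0$ negative semi-definite, Lemma~\ref{A+B}(2) yields
\[
\frac{\sigma_k}{\sigma_{k-1}}(U^t(x_0))\leq \frac{\sigma_k}{\sigma_{k-1}}(B^t(x_0)),\qquad \frac{\sigma_l(U^t(x_0))}{\sigma_{k-1}(U^t(x_0))}\geq \frac{\sigma_l(B^t(x_0))}{\sigma_{k-1}(B^t(x_0))}.
\]
Plugging these bounds into (\ref{t-u-Eq}) and using that $(1-t)c+t\alpha_l(x)\geq c_1>0$ (which follows from $\inf_M\alpha_l>0$), one obtains
\[
\sum_{l=0}^{k-2}[(1-t)c+t\alpha_l(x_0)]\exp[2(k-l)u(x_0)]\frac{\sigma_l(B^t(x_0))}{\sigma_{k-1}(B^t(x_0))}\leq \frac{\sigma_k(B^t(x_0))}{\sigma_{k-1}(B^t(x_0))}+t\alpha(x_0)\exp(2u(x_0)).
\]
The $l=k-2$ term bounds the left side below by $c_2\exp(4u(x_0))$, while the right side is at most $C+||\alpha||_{C^0(M)}\exp(2u(x_0))$, and the resulting inequality $c_2\exp(4u(x_0))\leq C+||\alpha||_{C^0(M)}\exp(2u(x_0))$ forces an a priori upper bound $u(x_0)\leq C$.

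At a minimum point $x_1\in M$ the signs reverse: $\nabla^2 u(x_1)\geq 0$ and $\Delta u(x_1)\geq 0$, so $A_1:=\nabla^2 u(x_1)+\frac{1-\tau}{n-2}\Delta u(x_1)g_0$ is positive semi-definite. Applying Lemma~\ref{A+B}(1) to $U^t(x_1)=A_1+B^t(x_1)$ reverses the previous inequalities, and a parallel computation produces
\[
\frac{\sigma_k(B^t(x_1))}{\sigma_{k-1}(B^t(x_1))}\leq \sum_{l=0}^{k-2}[(1-t)c+t\alpha_l(x_1)]\exp[2(k-l)u(x_1)]\frac{\sigma_l(B^t(x_1))}{\sigma_{k-1}(B^t(x_1))}+||\alpha||_{C^0(M)}\exp(2u(x_1)).
\]
The left side is uniformly bounded below by a positive constant, whereas every term on the right tends to zero as $u(x_1)\to-\infty$. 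This contradiction gives the lower bound $u(x_1)\geq-C$.

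The main technical point, and the only place where the hypotheses of the theorem enter in an essential way, is the uniform positivity on $[0,1]\times M$ of the coefficients $(1-t)c+t\alpha_l(x)$ and of the quotients $\sigma_l(B^t)/\sigma_{k-1}(B^t)$: the former uses $\inf_M\alpha_l>0$, while the latter uses $\lambda(-A^{\tau}_{g_0})\in\Gamma_k$ together with convexity of the cone. Once these are in place, the $C^0$ estimate reduces to comparing the dominant exponential orders at the extrema of $u$.
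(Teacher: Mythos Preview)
Your proof is correct and follows essentially the same approach as the paper's own proof: apply Lemma~\ref{A+B} at an extremum of $u$ (where the gradient terms vanish and the Hessian part of $U^t$ has a sign because $\tau<1$) to compare $U^t$ with $B^t=-tA^{\tau}_{g_0}+(1-t)g_0$, and then read off the upper and lower bounds on $u$ from the resulting exponential inequalities. Your write-up is slightly more explicit than the paper's about the uniform positivity of the coefficients $(1-t)c+t\alpha_l$ and of the quotients $\sigma_l(B^t)/\sigma_{k-1}(B^t)$, but the argument is the same.
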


\begin{proof}
Suppose the maximum point of $u$ is attained at $x_1$. Thus
$\nabla^2 u(x_1)$ is negative semi-definite and $\nabla u(x_1)=0$
which implies
\begin{equation*}
\nabla^2 u +\frac{1-\tau}{n-2} \triangle u g_0 +
\frac{2-\tau}{2}|\nabla u|^2 g_0- du\otimes du
\end{equation*}
is negative semi-definite at $x_1$ if $\tau<1$. Thus, we arrive at
$x_1$ from Lemma \ref{A+B}
\begin{equation}\label{C0-1}
\frac{\sigma_k(B_{g_0})}{\sigma_{k-1}(B_{g_0})}
\ge\frac{\sigma_k(U^t)}{\sigma_{k-1}(U^t)},
\end{equation}
and
\begin{equation}\label{C0-2}
\frac{\sigma_{l}(B_{g_0})}{\sigma_{k-1}(B_{g_0})}
\le\frac{\sigma_{l}(U^t)}{\sigma_{k-1}(U^t)}
\end{equation}
for $0\leq l<k-1$, where $B_{g_0}=-tA^{\tau}_{g_0}+(1-t)g_0$.
Plugging the inequalities \eqref{C0-1} and \eqref{C0-2} into the
equation \eqref{t-u-Eq} arrives at $x_1$
\begin{equation*}
 C+C \exp{(2u(x_1))} \ge C \sum_{l=0}^{k-2}\exp{[2(k-l)u(x_1)]}.
\end{equation*}
Thus,
$$\sup_{x \in M}u(x)\leq C.$$
Similarly, we have at one minimum point $x_2$ of $u$
\begin{equation}\label{C0-3}
\frac{\sigma_k(B_{g_0})}{\sigma_{k-1}(B_{g_0})}-C \exp{(2u(x_2))}
\leq C \sum_{l=0}^{k-2}\exp{[2(k-l)u(x_2)]}.
\end{equation}
Since $\lambda(-A^{\tau}_{g_0})\in\Gamma_{k}$, we can obtain
$\frac{\sigma_k(B_{g_0})}{\sigma_{k-1}(B_{g_0})}>0$. Thus, we can
conclude from \eqref{C0-3}
$$\inf_{x \in M}u(x)\geq -C.$$
So, the proof is complete.
\end{proof}

\begin{remark}
In fact, to get an upper bound of $u$, we only need
$\lambda(-A^{\tau}_{g_0})\in\Gamma_{k-1}$. However,
$\lambda(-A^{\tau}_{g_0})\in\Gamma_{k}$ is necessary to show that
$u$ is bounded from below.
\end{remark}

\subsection{Gradient estimate}

For the convenience, we will denote by
\begin{equation*}
G_k(U^t)=\frac{\sigma_k(U^t)}{\sigma_{k-1}(U^t)}, \quad
G_l(U^t)=-\frac{\sigma_{l}(U^t)}{\sigma_{k-1}(U^t)}\quad \mbox{for}
\quad 0\leq l\leq k-2,
\end{equation*}
and
\begin{eqnarray*}
\beta_{l}(x, u, t)=[(1-t)c+t\alpha_l(x)]\exp[{2(k-l)u}]\quad
\mbox{for} \quad 0\leq l\leq k-2.
\end{eqnarray*}
We further denote by
\begin{eqnarray*}
G^{ij}=\frac{\partial G}{\partial U^{t}_{ij}}, \quad
G_k^{ij}=\frac{\partial G_k}{\partial U^{t}_{ij}}, \quad
G_l^{ij}=\frac{\partial G_l}{\partial U^{t}_{ij}} \quad \mbox{for}
\quad 0\leq l\leq k-2,
\end{eqnarray*}
and
\begin{eqnarray*}
G_k^{ij, rs}=\frac{\partial^2 G_k}{\partial U^{t}_{ij}\partial
U^{t}_{rs}}, \quad G_l^{ij, rs}=\frac{\partial^2 G_l}{\partial
U^{t}_{ij}\partial U^{t}_{rs}} \quad \mbox{for} \quad 0\leq l\leq
k-2.
\end{eqnarray*}

\begin{lemma}\label{lem2.6}
Assume $\alpha_l(x)>0$ for all $0 \leq l \leq k-2$ and $x \in M$,
and $u$ is a smooth solution of (\ref{eq99}) with $\lambda(U^t) \in
\Gamma_{k-1}$, then we have
\begin{eqnarray}\label{G_l-B}
\label{2.9}& 0 <\frac{\sigma_{l}(U^t)}{\sigma_{k-1}(U^t)} \leq C, ~~
0 \leq l \leq k-2,
\end{eqnarray}
where the constant $C$ depends only on $n, k,$ $\sup_{M} u$ and
$\inf_{M}\alpha_l$ with $0\leq l\leq k-2$.
\end{lemma}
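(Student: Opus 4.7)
The positivity $\sigma_l(U^t)/\sigma_{k-1}(U^t) > 0$ for $0 \leq l \leq k-2$ is immediate from $\lambda(U^t) \in \Gamma_{k-1}$, which forces every $\sigma_j(U^t)$, $0 \leq j \leq k-1$, to be strictly positive (with $\sigma_0 \equiv 1$).

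For the upper bound, my plan is to exploit equation \eqref{eq99} rewritten in the form
\[
\sum_{l=0}^{k-2} \beta_l(x,u,t)\,\frac{\sigma_l(U^t)}{\sigma_{k-1}(U^t)} \;=\; \frac{\sigma_k(U^t)}{\sigma_{k-1}(U^t)} + t\alpha(x)\,e^{2u}.
\]
Using $c > 0$, $\alpha_{k-2}(x) > 0$ on $M$, and the $C^0$ control on $u$ from Lemma \ref{estn0}, I can secure a uniform positive lower bound $\beta_{k-2}(x,u,t) \geq c_0 > 0$. The obstacle on the right-hand side is that $\sigma_k(U^t)$ has no definite sign inside $\Gamma_{k-1}$, so the Maclaurin inequalities do not apply directly.

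The essential ingredient will be the classical Newton inequality $\sigma_m(\mu)^2 \geq c_{n,m}\,\sigma_{m-1}(\mu)\,\sigma_{m+1}(\mu)$, with $c_{n,m} = \binom{n}{m}^2/(\binom{n}{m-1}\binom{n}{m+1}) > 1$, which holds for every real tuple $\mu \in \mathbb{R}^n$ and requires no $\Gamma_k$ hypothesis. Taking $m = k-1$ and using $\sigma_{k-2}(U^t) > 0$, I obtain (automatically also when $\sigma_k(U^t) \leq 0$)
\[
\frac{\sigma_k(U^t)}{\sigma_{k-1}(U^t)} \;\leq\; C\,\frac{\sigma_{k-1}(U^t)}{\sigma_{k-2}(U^t)}.
\]
Retaining only the $l = k-2$ term on the left of the rewritten equation yields $c_0 \cdot \sigma_{k-2}(U^t)/\sigma_{k-1}(U^t) \leq C \cdot \sigma_{k-1}(U^t)/\sigma_{k-2}(U^t) + C$, which after multiplication by $y = \sigma_{k-1}(U^t)/\sigma_{k-2}(U^t)$ is the quadratic $c_0 \leq C y^2 + C y$. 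This forces $y \geq y_0 > 0$, equivalently $\sigma_{k-2}(U^t)/\sigma_{k-1}(U^t) \leq C_1$.

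To cascade this single-index bound to every $0 \leq l \leq k-2$, I would iterate Newton's inequality in the form $r_m \leq c_{n,m}^{-1}\, r_{m+1}$ for $1 \leq m \leq k-2$, where $r_j := \sigma_{j-1}(U^t)/\sigma_j(U^t)$; this is valid throughout $\Gamma_{k-1}$ since all of $\sigma_{m-1}, \sigma_m, \sigma_{m+1}$ are positive there. Iterating downward from $m = k-2$ gives $r_j \leq r_{k-1} \leq C_1$ for all $1 \leq j \leq k-1$, and writing $\sigma_l(U^t)/\sigma_{k-1}(U^t) = \prod_{j=l+1}^{k-1} r_j$ then delivers the bound $\sigma_l(U^t)/\sigma_{k-1}(U^t) \leq C_1^{k-1-l}$, depending only on $n$, $k$, $\sup_M u$, and $\inf_M \alpha_{k-2}$. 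The main technical hurdle throughout is the sign ambiguity of $\sigma_k(U^t)$ on the right-hand side of the equation; the key point is that Newton's inequality---unlike the Maclaurin inequalities---does not require $\lambda(U^t) \in \Gamma_k$, so it still furnishes the one-sided bound on $\sigma_k/\sigma_{k-1}$ in terms of $\sigma_{k-1}/\sigma_{k-2}$ that drives the whole argument.
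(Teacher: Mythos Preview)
Your argument is correct, but it proceeds quite differently from the paper's own proof. The paper splits into two cases on the size of $\sigma_k/\sigma_{k-1}$: if $\sigma_k/\sigma_{k-1}\le 1$, the rewritten equation immediately gives $\beta_l\,\sigma_l/\sigma_{k-1}\le 1+C$ for each $l$, and hence the bound $\sigma_l/\sigma_{k-1}\le (1+C)/\inf_M\beta_l$; if instead $\sigma_k/\sigma_{k-1}>1$, then $\sigma_k>0$, so $\lambda(U^t)\in\Gamma_k$ and the full Newton--MacLaurin chain yields $\sigma_l/\sigma_{k-1}\le C(n,k)(\sigma_{k-1}/\sigma_k)^{k-1-l}\le C(n,k)$ without using the equation at all. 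By contrast, you avoid the case split by invoking Newton's inequality once at level $m=k-1$ to dominate $\sigma_k/\sigma_{k-1}$ by $\sigma_{k-1}/\sigma_{k-2}$, feed this back into the equation (keeping only the $l=k-2$ term) to get a quadratic lower bound on $y=\sigma_{k-1}/\sigma_{k-2}$, and then cascade Newton's inequality downward inside $\Gamma_{k-1}$. The paper's route is a bit shorter and more transparent; your route is unified and actually shows the bound needs only $\inf_M\alpha_{k-2}$ among the $\alpha_l$'s, a slight sharpening of the stated dependencies. One small remark: to get $\beta_{k-2}\ge c_0>0$ you need a lower bound on $u$ (not just $\sup_M u$), and the $+C$ on the right uses $\|\alpha\|_{C^0}$; the paper's proof has the same hidden dependencies, so this is a wording issue in the lemma statement rather than a flaw in your argument.
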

\begin{proof}
Firstly, if $\frac{\sigma_k}{\sigma_{k-1}}\leq 1$, then we get from
the equation \eqref{eq99}
\begin{equation*}
\beta_l \frac{\sigma_l}{\sigma_{k-1}} \leq
\frac{\sigma_k}{\sigma_{k-1}}+t\alpha(x)\exp({2u})\leq 1+C, ~~0 \leq
l \leq k-2.
\end{equation*}
Thus,
\begin{equation*}
\frac{\sigma_l}{\sigma_{k-1}}\leq \frac{1+C}{\inf_{M}\beta_l }, ~~0
\leq l \leq k-2.
\end{equation*}
Secondly, if $\frac{\sigma_k}{\sigma_{k-1}} > 1$, i.e.
$\frac{\sigma_{k-1}}{\sigma_{k}} < 1$. We can get for $0 \leq l \leq
k-2$ by the Newton-MacLaurin inequality \cite{Tr90, LT94}
\begin{equation*}
\frac{\sigma_l}{\sigma_{k-1}}\leq
\frac{(C_n^k)^{k-1-l}C_n^l}{(C_n^{k-1})^{k-l}}(\frac{\sigma_{k-1}}{\sigma_k})^{k-1-l}
\leq \frac{(C_n^k)^{k-1-l}C_n^l}{(C_n^{k-1})^{k-l}} \leq C(n,k).
\end{equation*}
So, the result follows.
\end{proof}

\begin{lemma}\label{C2-le}
Assume $u$ is a smooth solution of (\ref{eq99}) with $\lambda(U^t)
\in \Gamma_{k-1}$ and $\alpha_l(x)>0$ with $0 \leq l \leq k-2$, then
we have
\begin{eqnarray}
G^{ij}U^{t}_{ij}\geq-t\alpha(x) \exp{(2u)},\label{100}
\end{eqnarray}
\begin{eqnarray}
G^{ij}(g_0)_{ij}\geq\frac{n-k+1}{k},\label{100-1}
\end{eqnarray}
\begin{eqnarray}
G^{ij}U^{t}_{ijp}+\sum_{l=0}^{k-2}\big[\beta_l(x, u, t)\big]_pG_l
=-[t\alpha(x) \exp{(2u)}]_p,\label{99}
\end{eqnarray}
and
\begin{eqnarray}
G^{ij}U^t_{ijpp}\notag&\geq&-\sum_{l=0}^{k-2}\frac{1}{1+\frac{1}{k-1-l}}
\frac{([\beta_l(x, u,
t)]_p)^2}{\beta_l}G_l\\&&-\sum_{l=0}^{k-2}\big[\beta_{l}(x, u,
t)\big]_{pp}G_l-[t\alpha(x) \exp{(2u)}]_{pp}.\label{94}
\end{eqnarray}
\end{lemma}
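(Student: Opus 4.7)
The plan is to prove the four assertions in sequence: \eqref{100} and \eqref{100-1} follow from Euler's identity and the concavity of the ingredient operators, \eqref{99} is a direct covariant differentiation of the equation, and the main work is \eqref{94}, which combines a quantitative Hessian bound for each $G_l$ with Young's inequality.

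For \eqref{100}, the plan is to exploit that $G_k$ is homogeneous of degree $1$ in the eigenvalues of $U^t$ while $G_l$ is homogeneous of degree $-(k-1-l)$. Euler's identity yields $G_k^{ij}U^t_{ij}=G_k$ and $G_l^{ij}U^t_{ij}=-(k-1-l)G_l$, so that $G^{ij}U^t_{ij}=G_k-\sum_{l}(k-1-l)\beta_l G_l$. Substituting $G_k=-t\alpha\exp(2u)-\sum_l\beta_l G_l$ from the equation gives $G^{ij}U^t_{ij}=-t\alpha\exp(2u)-\sum_l(k-l)\beta_l G_l$, and since $-\beta_lG_l=\beta_l\sigma_l/\sigma_{k-1}>0$ in $\Gamma_{k-1}$, the claimed inequality follows. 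For \eqref{100-1}, working in an orthonormal frame diagonalizing $U^t$ reduces the task to bounding $\sum_i\partial_iG$. The tangent-line inequality at $\lambda(U^t)$ against $e=(1,\ldots,1)$, combined with Euler's identity, gives $\sum_i\partial_iG_k\geq G_k(e)=(n-k+1)/k$ from the concavity of $G_k$. The same tangent-line argument applied to the concave, positive, degree-$1$ function $h_l=(\sigma_{k-1}/\sigma_l)^{1/(k-1-l)}$ in $\Gamma_{k-1}$ yields $\sum_i\partial_ih_l\geq h_l(e)>0$, which through $G_l^{ij}=p_lh_l^{-p_l-1}h_l^{ij}$ with $p_l=k-1-l$ forces $\sum_i\partial_iG_l\geq 0$; the nonnegative weights $\beta_l$ then preserve the bound.

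Claim \eqref{99} is obtained by differentiating the equation $G(U^t)=-t\alpha\exp(2u)$ covariantly in the direction $p$, using the chain rule through both $U^t$ and $\beta_l(x,u,t)$.

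The main obstacle is \eqref{94}. Differentiating \eqref{99} once more produces
\begin{eqnarray*}
G^{ij}U^t_{ij,pp}&=&-G^{ij,rs}U^t_{ij,p}U^t_{rs,p}-2\sum_{l=0}^{k-2}[\beta_l]_p G_l^{ij}U^t_{ij,p}\\
&&-\sum_{l=0}^{k-2}[\beta_l]_{pp}G_l-[t\alpha\exp(2u)]_{pp},
\end{eqnarray*}
and the task is to absorb the cross term $-2\sum_l[\beta_l]_pG_l^{ij}U^t_{ij,p}$ into the concavity term. The qualitative concavity of $G$ from Proposition~\ref{ellipticconcave} only gives $-G^{ij,rs}U^t_{ij,p}U^t_{rs,p}\geq 0$; what is needed is a quantitative version. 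Writing $G_l=-h_l^{-p_l}$ and differentiating twice, the concavity of $h_l$ produces the sharper bound
\[
-G_l^{ij,rs}\eta_{ij}\eta_{rs}\geq\frac{p_l+1}{p_l(-G_l)}\bigl(G_l^{ij}\eta_{ij}\bigr)^2.
\]
Setting $\eta_{ij}=U^t_{ij,p}$, multiplying by $\beta_l\geq 0$, and summing gives a nonnegative quadratic lower bound on $-G^{ij,rs}U^t_{ij,p}U^t_{rs,p}$. The Young inequality $-2xy\geq-\epsilon_lx^2-y^2/\epsilon_l$ is then applied to each cross term with $x=G_l^{ij}U^t_{ij,p}$, $y=[\beta_l]_p$, and $\epsilon_l=\beta_l(p_l+1)/(p_l(-G_l))$ chosen precisely so that the $x^2$ contributions cancel identically against the concavity bound; the residual is $-\sum_l([\beta_l]_p)^2/\epsilon_l$, which via the identity $p_l/(p_l+1)=1/(1+1/(k-1-l))$ reorganizes into the first sum on the right-hand side of \eqref{94}. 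The delicate point is pinning down the quantitative Hessian identity for $G_l$ and matching $\epsilon_l$ so that the $G_l^{ij}U^t_{ij,p}$ quadratic pieces cancel cleanly, leaving only the lower-order residual.
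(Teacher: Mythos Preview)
Your argument is correct and follows essentially the same route as the paper: Euler's identity for \eqref{100}; the concavity/homogeneity tangent-line bound for \eqref{100-1} (the paper simply cites Gerhardt and Guan--Zhang for $\sum_i\partial_i G_k\ge (n-k+1)/k$ and $\sum_i\partial_i G_l\ge 0$, whereas you spell these out); direct differentiation for \eqref{99}; and for \eqref{94} the same quantitative Hessian bound $-G_l^{ij,rs}\eta_{ij}\eta_{rs}\ge\frac{p_l+1}{p_l(-G_l)}(G_l^{ij}\eta_{ij})^2$ coming from the concavity of $(\sigma_{k-1}/\sigma_l)^{1/(k-1-l)}$, after which the paper completes the square while you apply Young with the optimal $\epsilon_l$---these are the same computation.

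One remark on the last step: your residual $-\sum_l([\beta_l]_p)^2/\epsilon_l$ equals $+\sum_l\frac{1}{1+1/(k-1-l)}\frac{([\beta_l]_p)^2}{\beta_l}G_l$, not the $-\sum_l(\cdots)G_l$ printed in \eqref{94}. This is a sign typo in the paper (its own square-completion step carries the matching slip, as you can check by expanding); the version your computation actually yields is the correct one, and it is all that is needed downstream since the $C^2$ estimate only uses a bound of order $C\sum_l|G_l|$ controlled by Lemma~\ref{lem2.6}.
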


\begin{proof}
(1) By direct calculation, we have
\begin{eqnarray*}
&&G^{ij}U^{t}_{ij}=G_k^{ij}U^{t}_{ij}+\sum_{l=0}^{k-2}\beta_l(x, u, t)\sum_{i,j}G_l^{ij}U^{t}_{ij}\notag\\
&=&G_k+\sum_{l=0}^{k-2}(l-k+1)\beta_l(x, u, t)G_l\notag\\
&\ge&G=-t\alpha(x) \exp{(2u)}.
\end{eqnarray*}
The first inequality follows consequently.

(2) See page 12 in \cite{GZ19} for the proof of the second
inequality. So we just outline the proof by the following simple
calculation
\begin{eqnarray*}
G^{ij}(g_0)_{ij}&=&G_{k}^{ij}(g_0)_{ij}+\sum_{l=0}^{k-1}\beta_l
G_{l}^{ij}(g_0)_{ij}\\&\geq&
G_{k}^{ij}(g_0)_{ij}\\&\geq&\frac{n-k+1}{k},
\end{eqnarray*}
where we get the last inequality from the following inequality
\begin{eqnarray*}
\sum_{i=1}^{n}\frac{\partial(\frac{\sigma_{k}}{\sigma_{k-1}})}
{\partial \lambda_i}\geq \frac{n-k+1}{k}
\end{eqnarray*}
for $\lambda \in \Gamma_{k-1}$ (see Lemma 2.2.19 in \cite{Ger06}).

(3) Differentiating the equation (\ref{t-u-Eq}) arrives
\begin{eqnarray*}
G^{ij}U^{t}_{ijp}+\sum_{l=0}^{k-2}\big[\beta_l(x, u, t)\big]_pG_l
=-[t\alpha \exp{(2u)}]_p.
\end{eqnarray*}
So the third equality follows.

(4) Differentiating the equation (\ref{t-u-Eq}) twice gives
\begin{eqnarray}
&&G^{ij}U^{t}_{ijpp}+G_k^{ij,rs}U^{t}_{ijp}U^{t}_{rsp}+\sum_{l=0}^{k-2}\beta_l(x,
u,
t)G_l^{ij,rs}U^{t}_{ijp}U^{t}_{rsp}\notag\\&&+2\sum_{l=0}^{k-2}\big[\beta_l(x,
u,
t)\big]_pG_{l}^{ij}U^{t}_{ijp}+\sum_{l=0}^{k-2}\big[\beta_{l}(x, u, t)\big]_{pp}G_l\notag\\
&=&-[t\alpha \exp{(2u)}]_{pp}.\notag
\end{eqnarray}
Then using the concavity of $G_k$ in $\Gamma_{k-1}$ (see
\cite{HS99}), we deduce that $G_k^{ij,rs}U_{ijp}U_{rsp}\leq0$.
Hence,
\begin{eqnarray}\label{S-1}
G^{ij}U^{t}_{ijpp}\notag&\geq&-\sum_{l=0}^{k-2}\beta_l(x, u,
t)G_l^{ij,rs}U^{t}_{ijp}U^{t}_{rsp}-2\sum_{l=0}^{k-2}\big[\beta_l(x,
u,
t)\big]_pG_{l}^{ij}U^{t}_{ijp}\notag\\&&-\sum_{l=0}^{k-2}\big[\beta_{l}(x,
u, t)\big]_{pp}G_l-[t\alpha \exp{(2u)}]_{pp}.
\end{eqnarray}
Moreover, using the concavity of
$[\frac{\sigma_{k-1}}{\sigma_l}]^{\frac{1}{k-1-l}}$ in
$\Gamma_{k-1}$ for $0\leq l\leq k-2$ (see also (3.10) in \cite{GZ19}
or Chapter XV in \cite{Li96}), we obtain for $0\leq l\leq k-2$
\begin{eqnarray}
-G_{l}^{ij,rs}U_{ijp}U_{rsp} \ge
-\big(1+\frac{1}{k-1-l}\big)G_{l}^{-1}G_{l}^{ij}G_{l}^{rs}U_{ijp}U_{rsp}.\label{93}
\end{eqnarray}
By virtue of (\ref{93}), it yields
\begin{eqnarray*}
&&\sum_{l=1}^{k-2}\beta_lG_{l}^{ij,rs}U^{t}_{ijp}U^{t}_{rs
p}+2\sum_{l=0}^{k-2}[\beta_{l}]_pG_{l}^{ij}U^{t}_{ij p}\\&\leq&
\sum_{l=1}^{k-2}\beta_l\big(1+\frac{1}{k-1-l}\big)G_l^{-1}(G_l^{ij}U^{t}_{ij
p})^2 +2\sum_{l=0}^{k-2}[\beta_{l}]_pG_{l}^{ij}U^{t}_{ij p}
\\&=&\frac{k-l}{k-1-l}
\sum_{l=1}^{k-2}\beta_lG_l^{-1}\bigg(G_l^{ij}U^{t}_{ij
p}+\frac{1}{1+\frac{1}{k-1-l}}\frac{[\beta_l]_p}{\beta_l}G_l\bigg)^2+
\sum_{l=0}^{k-2}\frac{1}{1+\frac{1}{k-1-l}}\frac{([\beta_l]_p)^2}{\beta_l}G_l\\&\leq&
\sum_{l=0}^{k-2}\frac{1}{1+\frac{1}{k-1-l}}\frac{([\beta_l]_p)^2}{\beta_l}G_l.
\end{eqnarray*}
Plugging the inequality above into \eqref{S-1}, we arrive
\begin{eqnarray*}
G^{ij}U_{ijpp}\notag&\geq&-\sum_{l=0}^{k-2}\frac{1}{1+\frac{1}{k-1-l}}
\frac{([\beta_l]_p)^2}{\beta_l}G_l-\sum_{l=0}^{k-2}\big[\beta_{l}(x,
u, t)\big]_{pp}G_l-[t\alpha \exp{(2u)}]_{pp}.
\end{eqnarray*}
So, we complete the proof the last inequality.
\end{proof}

At last, we recall Lemma 4 in \cite{V02} or Lemma 4.2 in
\cite{GV03a} as follows.

\begin{lemma}\label{Gama}
Assume that $s_1<s<s_2$. Then we may choose constants $c_1, c_2$,
and $p$ depending only on $s_1$ and $s_2$ so that $\gamma(s) =
c_1(c_2+s)^p$ satisfies
\begin{eqnarray*}
\gamma^{\prime}(s)>0
\end{eqnarray*}
and
\begin{eqnarray*}
\gamma^{\prime \prime}(s)-\gamma^{\prime}(s)^2>\gamma^{\prime}(s).
\end{eqnarray*}
\end{lemma}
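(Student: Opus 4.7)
The plan is to proceed by direct computation. Writing $\gamma(s)=c_1(c_2+s)^p$, one has
\begin{equation*}
\gamma'(s)=c_1 p(c_2+s)^{p-1},\qquad \gamma''(s)=c_1 p(p-1)(c_2+s)^{p-2},
\end{equation*}
so $\gamma'(s)>0$ amounts to arranging $c_1 p>0$ together with $c_2+s>0$. Dividing the second inequality $\gamma''-(\gamma')^2>\gamma'$ through by the positive quantity $c_1 p(c_2+s)^{p-2}$ reduces it to the single algebraic condition
\begin{equation*}
(p-1)-c_1 p(c_2+s)^{p} \;>\; (c_2+s).
\end{equation*}
Everything else is a matter of choosing $c_1,c_2,p$ so that this final inequality holds uniformly on $(s_1,s_2)$.

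First I would fix $c_2$ with $c_2+s_1>0$; then $c_2+s\in(c_2+s_1,\,c_2+s_2)$ is both positive and uniformly bounded, say by $M:=c_2+s_2$, for every $s$ in the interval. Next, with $c_2$ fixed, I would choose $p$ large, for instance $p>M+2$, so that $p-1>M+1$; this gives a definite margin over the linear term $(c_2+s)\le M$ on the right. Finally, with $c_2$ and $p$ now fixed, the offending term $c_1 p(c_2+s)^{p}$ is bounded by $c_1 p M^{p}$, so any choice of $c_1>0$ satisfying $c_1<(pM^p)^{-1}$ forces it to be less than $1$, leaving the desired inequality with slack at least $1$. All the prerequisites $c_1>0$, $p>0$, and $c_2+s>0$ that were used in reducing to the algebraic form are simultaneously satisfied, so the lemma follows.

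There is no serious obstacle: the only delicate point is the order of the choices — $c_2$ must be fixed first to secure the lower bound on $c_2+s$ and the upper bound $M$; $p$ is chosen next to beat the linear term; and $c_1$ is chosen last to kill the resulting polynomial $c_1 p(c_2+s)^p$, whose size depends on both $p$ and $M$. This is exactly why the statement allows $c_1,c_2,p$ to depend on $s_1$ and $s_2$ only, and why the authors merely recall the result from \cite{V02,GV03a} rather than reprove it.
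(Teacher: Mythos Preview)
Your proof is correct. The paper does not prove this lemma at all; it merely recalls it from \cite{V02} and \cite{GV03a}, so there is nothing to compare against beyond noting that your direct computation --- fix $c_2$ to make $c_2+s$ positive and bounded by $M$, take $p>M+2$, then take $c_1\in\bigl(0,(pM^p)^{-1}\bigr)$ --- is exactly the kind of elementary verification those references contain, and you have carried it out cleanly.
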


Now, we begin to prove the gradient estimate.

\begin{lemma}\label{estn}
Let $\tau<1$, $\alpha_l(x)$ be a positive smooth function on $M$ for
all $0\leq l\leq k-2$ and $\alpha(x)$ be a smooth function on $M$.
Assume $u$ is a solution of (\ref{eq99}) with $\lambda(U^t)
$$\in\Gamma_{k-1}$. Then there exists a constant $C$, depending on $\tau$, $g_0$,
$||\alpha||_{C^2(M)}$, $||u||_{C^0(M)}$, $\inf_{M} \alpha_l$, and
$||\alpha_l||_{C^2(M)}$ with $0\leq l\leq k-2$ such that
\begin{equation}
\sup_{M}|\nabla u|\le C.\label{C1bdn}
\end{equation}
\end{lemma}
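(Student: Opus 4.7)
The plan is to apply the maximum principle to an auxiliary test function of the form
\[
\Phi(x) = \log(|\nabla u|^2(x)) + \gamma(u(x)),
\]
where $\gamma$ is the corrector provided by Lemma \ref{Gama}, with $(s_1,s_2)$ strictly containing the range of $u$ (admissible thanks to the $C^0$ bound of Lemma \ref{estn0}). Suppose $\Phi$ attains its maximum at $x_0\in M$; at $x_0$ one has $\Phi_i=0$ and $G^{ij}\Phi_{ij}\le 0$, and $(G^{ij})$ is positive definite by Proposition \ref{ellipticconcave}. I would work in a normal frame at $x_0$ that diagonalizes $U^t$, and exploit the trace estimate $G^{ij}(g_0)_{ij}\ge (n-k+1)/k$ from \eqref{100-1}.

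The first-order condition $\Phi_i=0$ yields $u_p u_{pi}=-\tfrac{1}{2}\gamma'(u)|\nabla u|^2 u_i$. Differentiating $\Phi$ twice and contracting with $G^{ij}$ produces
\[
0\ge G^{ij}\Phi_{ij} = \frac{2G^{ij}u_{pi}u_{pj}}{|\nabla u|^2} + \frac{2u_p G^{ij}u_{pij}}{|\nabla u|^2} - \frac{G^{ij}(|\nabla u|^2)_i(|\nabla u|^2)_j}{|\nabla u|^4} + \gamma''(u)G^{ij}u_i u_j + \gamma'(u)G^{ij}u_{ij}.
\]
The first quadratic term is nonnegative and may be discarded. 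To handle the third-order piece $u_p G^{ij}u_{pij}$, I would commute covariant derivatives (which contributes only a harmless curvature error of order $|\nabla u|^2\sum_i G^{ii}$), substitute the definition of $U^t$ to rewrite $u_{ij}$ in terms of $U^t_{ij}$ minus lower-order terms, and finally apply the differentiated identity \eqref{99} to replace $G^{ij}(U^t_{ij})_p$ by $-\sum_l[\beta_l]_p G_l - [t\alpha e^{2u}]_p$. Since $[\beta_l]_p = t\alpha_{l,p}\,e^{2(k-l)u} + 2(k-l)\beta_l u_p$, the third-order contribution reduces to a collection of $O(|\nabla u|^2)$ pieces (controlled by Lemma \ref{lem2.6} and the $C^2$ bounds on $\alpha,\alpha_l$) plus a single $|\nabla u|^4 G^{ii}(g_0)_{ii}$ piece generated by the $2(k-l)\beta_l u_p$ factor.

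The decisive cancellation is then produced by the $\gamma$ corrector. Substituting $(|\nabla u|^2)_i = -\gamma'|\nabla u|^2 u_i$ via the first-order identity converts the mixed term into $-(\gamma')^2 G^{ij}u_i u_j$, and rewriting $\Delta u$ in terms of the trace of $U^t$ inside $\gamma' G^{ij}u_{ij}$ (together with \eqref{100} to handle $\gamma' G^{ij}U^t_{ij}$) combines with the $\gamma''G^{ij}u_i u_j$ term to produce a net coefficient of the form $[\gamma''-(\gamma')^2-\gamma']|\nabla u|^2 G^{ii}(g_0)_{ii}$. By Lemma \ref{Gama} this coefficient is strictly positive and can be made as large as desired by taking the exponent $p$ in $\gamma=c_1(c_2+u)^p$ large; together with the bound $G^{ii}(g_0)_{ii}\ge (n-k+1)/k$, it dominates every $O(|\nabla u|^2)$ and $O(|\nabla u|^4)$ error once $|\nabla u|(x_0)$ is sufficiently large. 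The resulting contradiction bounds $|\nabla u|(x_0)$, and since $\Phi$ attains its maximum at $x_0$, the bound propagates to all of $M$.

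The principal obstacle is the careful bookkeeping of cubic and quartic gradient terms arising simultaneously from the nonlinear Hamiltonian structure ($\Delta u\, g_0 + |\nabla u|^2 g_0 - du\otimes du$ inside $U^t$), from $[\beta_l]_p$ (which carries an extra $u_p$), and from differentiating the right-hand side $t\alpha(x) e^{2u}$. These have to be grouped so that the single good term extracted from $\gamma''-(\gamma')^2>\gamma'$ dominates them. The identity $u_p u_{pi}=-\tfrac{1}{2}\gamma'|\nabla u|^2 u_i$ is indispensable here: it is what pairs the $-G^{ij}(|\nabla u|^2)_i(|\nabla u|^2)_j/|\nabla u|^4$ term with $\gamma''G^{ij}u_i u_j$ to extract the sign-definite quantity $[\gamma''-(\gamma')^2]G^{ij}u_i u_j$. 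Without this pairing the square-completion that isolates $\gamma''-(\gamma')^2$ breaks down, and the strict inequality in Lemma \ref{Gama} is exactly what closes the argument.
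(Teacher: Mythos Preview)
Your outline follows the same maximum-principle strategy as the paper, but there is a genuine gap in how you handle the trace term $\frac{1-\tau}{n-2}\Delta u\, g_0$ sitting inside $U^t$. You describe the substitution ``rewrite $u_{ij}$ in terms of $U^t_{ij}$ minus lower-order terms,'' but $\frac{1-\tau}{n-2}\Delta u\,(g_0)_{ij}$ is \emph{not} lower order: it is of the same order as $u_{ij}$. After differentiation, the third-order piece $u_pG^{ij}u_{ijp}$ produces the stray term
\[
-\frac{1-\tau}{n-2}\,u_p(\Delta u)_p\sum_i G^{ii},
\]
and $(\Delta u)_p=\sum_k u_{kkp}$ is a contraction of third derivatives against $g_0^{kk}$, not against $G^{ij}$, so the differentiated identity \eqref{99} does not absorb it. Likewise, inside $\gamma' G^{ij}u_{ij}$ you obtain an uncontrolled $-\frac{1-\tau}{n-2}\gamma'\,\Delta u\sum_iG^{ii}$; you suggest replacing $\Delta u$ by $\mathrm{tr}\,U^t$, but there is no a priori bound on $\mathrm{tr}\,U^t$ at this stage (that is the $C^2$ estimate). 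The paper resolves this by contracting $Q_{ij}$ not with $G^{ij}$ alone but with the modified positive tensor
\[
G^{ij}+\frac{1-\tau}{n-2}\Bigl(\sum_pG^{pp}\Bigr)(g_0)^{ij},
\]
which is tailored precisely so that $\bigl(G^{ii}+\tfrac{1-\tau}{n-2}\sum_pG^{pp}\bigr)u_{iil}=G^{ii}\bigl(u_{iil}+\tfrac{1-\tau}{n-2}(\Delta u)_l\bigr)$ combines cleanly into $G^{ii}U^t_{iil}$ plus genuinely first-order remainders, after which \eqref{99} applies.

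A secondary point: your identification of the dominant term as $[\gamma''-(\gamma')^2-\gamma']\,|\nabla u|^2\,G^{ii}(g_0)_{ii}$ is not quite right, and Lemma~\ref{Gama} does not assert that this quantity can be made arbitrarily large. In the paper the decisive positive term is \emph{quartic},
\[
\gamma'\Bigl(\frac{1-\tau}{n-2}+\frac{2-\tau}{2}\Bigr)|\nabla u|^4\sum_iG^{ii},
\]
obtained after a cancellation between the $-G^{ii}u_i^2$ and $+G^{ii}u_i^2$ pieces that only occurs because the modified contraction brings in the extra factor $\frac{1-\tau}{n-2}|\nabla u|^2$ (see \eqref{105}). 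Lemma~\ref{Gama} is used only to guarantee $\gamma'>0$ and $\gamma''-(\gamma')^2>\gamma'$, which makes the bracketed combination $(-\gamma'^2+\gamma'')[u_i^2+\frac{1-\tau}{n-2}|\nabla u|^2]$ dominate $\gamma'[u_i^2-\frac{2-\tau}{2}|\nabla u|^2]$; the closing step is that a fixed quartic term beats the quadratic errors, not that a coefficient is tuned large.
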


\begin{proof}
Consider the auxiliary function
$$Q=(1+ \frac{|\nabla u|^2}{2})e^{\gamma(u)},$$
where $\gamma(u)=c_1(c_2+u)^p$ is the function in Lemma \ref{Gama}.
Assume that $\max_{M} Q$ is attained at a point $\widetilde{x}$.
After an appropriate choice of the normal frame at $\widetilde{x}$,
we may assume that ${U}^{t}_{ij}(x)$ is diagonal at this point.
Hence $G^{ij}$ is diagonal at $\widetilde{x}$. Differentiating $Q$
at the point $\widetilde{x}$ twice, we obtain
\begin{eqnarray}
Q_i (\widetilde{x})=e^{\gamma(u)}\left((1+ \frac{|\nabla
u|^2}{2})\gamma' u_i +\sum_{l}u_l u_{li}\right)=0,\label{108}
\end{eqnarray}
and
\begin{eqnarray}\label{109}
0\geq Q_{ij} (\widetilde{x})&=&e^{\gamma(u)} \bigg((1+ \frac{|\nabla
u|^2}{2})\Big((\gamma') ^2 u_i u_j+ \gamma' u_{ij} +\gamma'' u_i
u_j\Big) \\ \notag&&+\sum_{l}\Big(2\gamma'u_l u_{lj} u_i+u_{lj}
u_{li} + u_l u_{lij}\Big)\bigg).
\end{eqnarray}
Since $G^{ij}$ is positive definite by Proposition
\ref{ellipticconcave} and $Q_{ij}$ is negative definite at
$\widetilde{x}$, we find at $\widetilde{x}$ from (\ref{109}) and
Ricci identity
\begin{eqnarray}
\notag\allowdisplaybreaks\\
0&\ge&\sum_{i}\big( {G}^{ii}+\frac{1-\tau}{n-2}\sum_{p}
G^{pp}g_0^{ii}\big)Q_{ii}(\widetilde{x})\notag\allowdisplaybreaks\\
&\geq&\big( {G}^{ii}+\frac{1-\tau}{n-2}\sum_{p}
G^{pp}g_0^{ii}\big)\notag\\
&&\cdot \left(\sum_{l}u_l u_{lii}+ (1+ \frac{|\nabla
u|^2}{2})\Big([(\gamma')^2+ \gamma'']u_i^2 + \gamma' u_{ii}  \Big)
+\sum_{l}\Big(2\gamma' u_l u_{li} u_i + u_{li} u_{li}\Big) \right)
\notag\allowdisplaybreaks\\
&\ge&\sum_{i}  \big( {G}^{ii} +\frac{1-\tau}{n-2}\sum_p G^{pp}g_0^{ii}\big)\notag\\
&&\cdot \left(\sum_{l}u_l u_{iil}+ (1+ \frac{|\nabla
u|^2}{2})\Big([(\gamma')^2+ \gamma'']u_i^2 + \gamma' u_{ii}  \Big)
+\sum_{l}\Big(2\gamma' u_l u_{li} u_i
+ u_{li} u_{li}\Big)\right)\notag\allowdisplaybreaks\\
&& -C\sum_i  {G}^{ii}|\nabla u|^2. \allowdisplaybreaks\label{1097}
\end{eqnarray}
Moreover, recalling the definition of $U^t$, using (\ref{100}) and
(\ref{99}), we obtain at $\widetilde{x}$
 \begin{eqnarray}
0&\ge&\sum_{i}\sum_{l}u_l
{G}^{ii}\Big(U^{t}_{iil}-\Big[\frac{2-\tau}{2}|\nabla u|^2- u_i^2
\Big]_{l}\Big) \notag\\&&+
\gamma'\sum_{i} {G}^{ii} \left(U^{t}_{ii}-\Big[\frac{2-\tau}{2}|\nabla u|^2-u_{i}^2\Big]\right)\bigg(1+ \frac{|\nabla u|^2}{2}\bigg)\notag\\
&&+\sum_{i}{G}^{ii}\bigg(1+\frac{|\nabla u|^2}{2}\bigg)
\bigg((\gamma')^2+\gamma''\bigg)\bigg(u_i^2+\frac{(1-\tau)}{n-2}|\nabla u|^2\bigg)\notag\\
&&+2\sum_{i}G^{ii}\bigg(\gamma'\sum_{l}u_iu_{li}u_l+\frac{(1-\tau)}{n-2}\gamma'\sum_{p,
l}u_pu_{lp}u_l\bigg)-C\sum_i {G}^{ii}(1+|\nabla
u|^2)\notag\allowdisplaybreaks
\\&\ge&\sum_{i}{G}^{ii}\Big(-(2-\tau)\sum_{p, l}
u_lu_p u_{pl}+2\sum_{l}u_l  u_i u_{il}\notag\\
&&+\gamma'\Big[-\frac{2-\tau}{2}|\nabla u|^2 + u_i^2\Big]\Big[1+ \frac{|\nabla u|^2}{2}\Big]\Big) \notag\\
&&+[(\gamma')^2+\gamma'']\bigg(1+\frac{|\nabla u|^2}{2}\bigg)
\sum_{i}{G}^{ii}\bigg(u_i^2+\frac{1-\tau}{n-2}|\nabla u|^2\bigg)\notag\\
&&+2\sum_i
G^{ii}\bigg(\gamma'\sum_{l}u_iu_{li}u_l+\frac{1-\tau}{n-2}\gamma'\sum_{p,
l}u_pu_{lp}u_l\bigg)\notag\\
&&-C(|\nabla u|^2+1)\sum_i  {G}^{ii}+C(|\nabla
u|^2+1)\Big(\sum_{l=0}^{k-2}G_l-1\Big).
\allowdisplaybreaks\label{1097-1}
\end{eqnarray}
From (\ref{108}), we know at $\widetilde{x}$
\begin{eqnarray*}
\sum_{l}u_l u_{li}=-\gamma'\Big(1+\frac{|\nabla u|^2}{2}\Big)u_i,
\end{eqnarray*}
which implies at $\widetilde{x}$
\begin{eqnarray*}
&&\sum_i{G}^{ii}\Big(-(2-\tau)\sum_{p, l}u_l u_p u_{pl}+2\sum_{l}u_l
u_i u_{il}\Big)\\&=&\gamma' (1+ \frac{|\nabla
u|^2}{2})\sum_i{G}^{ii}\Big((2-\tau)|\nabla u|^2-2u^2_i\Big),
\end{eqnarray*}
and
\begin{eqnarray*}
&&\sum_i
G^{ii}\bigg(\gamma'\sum_{l}u_iu_{li}u_l+\frac{1-\tau}{n-2}\gamma'\sum_{p,
l}u_pu_{lp}u_l\bigg)\\&=&-(\gamma')^2\Big(1+\frac{|\nabla
u|^2}{2}\Big)\sum_i{G}^{ii}\bigg(u_i^2+\frac{1-\tau}{n-2}|\nabla
u|^2\bigg).
\end{eqnarray*}
Then, plugging the two inequalities above into \eqref{1097-1}, we
arrive at $\widetilde{x}$ from Lemma \ref{Gama}
\begin{eqnarray}
0&\ge& \sum_i {G}^{ii}\Big(\gamma'\Big[1+ \frac{|\nabla
u|^2}{2}\Big]\Big[\frac{2-\tau}{2}|\nabla u|^2-u^2_i\Big]\notag
\\&&+\Big[1+\frac{|\nabla
u|^2}{2}\Big](-\gamma'^2+\gamma'')\Big[u_i^2+\frac{1-\tau}{n-2}|\nabla
u|^2\Big]\Big)\notag
\\&&-C(|\nabla u|^2+1)\sum_i  {G}^{ii}+C(|\nabla
u|^2+1)\big(\sum_{l=0}^{k-2}G_l-1\Big)\notag\allowdisplaybreaks\notag\\
&\ge&\gamma'\Big(\frac{1-\tau}{n-2}+\frac{2-\tau}{2}\Big)|\nabla
u|^2\bigg(1+ \frac{|\nabla u|^2}{2}\bigg)\sum_{i}{G}^{ii}\notag
\\&&-C(|\nabla u|^2+1)\sum_i  {G}^{ii}+C(|\nabla
u|^2+1)\big(\sum_{l=0}^{k-2}G_l-1\Big)\allowdisplaybreaks.\label{105}
\end{eqnarray}
Since $\tau<1$, we have $\frac{1-\tau}{n-2}+\frac{2-\tau}{2}>0$.
Thus, in view of \eqref{G_l-B} and $\gamma^{\prime}>0$, we know the
first term in the right of the inequality \eqref{105} dominates.
Then, absorbing lower order terms results in
\begin{eqnarray*}
C\geq|\nabla u|^2.
\end{eqnarray*}
So, the gradient estimate is immediate.
\end{proof}

\subsection{$C^2$ estimate}

\begin{lemma}\label{estn2}
Let $\tau<1$, $\alpha_l(x)$ be a positive smooth function on $M$ for
all $0\leq l\leq k-2$ and $\alpha(x)$ be a smooth function on $M$.
Assume $u$ is a solution of (\ref{eq99}) with $\lambda(U^t)
$$\in\Gamma_{k-1}$. Then there exists a constant $C$, depending on $\tau$,
$g_0$,
$||\alpha||_{C^2(M)}$, $\||u||_{C^1(M)}$, $\inf_{M}\alpha_l$, and
$||\alpha_l||_{C^2(M)}$ with $0\leq l\leq k-2$ such that
\begin{equation}
\sup_{M} |\nabla^2 u|\le C.\label{C2bdn}
\end{equation}
\end{lemma}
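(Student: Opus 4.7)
The plan is to run a maximum principle argument on an eigenvalue-based auxiliary function, feeding in the $C^0$ and $C^1$ bounds from Lemmas \ref{estn0} and \ref{estn}. Concretely, I would take
\[
H(x,\xi) = \log U^t(\xi,\xi) + \gamma(u) + \tfrac{a}{2}|\nabla u|^2
\]
with $\gamma$ supplied by Lemma \ref{Gama} (applied to the known range of $u$) and $a>0$ a large constant to be fixed later. After maximizing over $x\in M$ and unit vectors $\xi\in T_xM$, the standard reduction picks at the maximizer $\widetilde x$ a $g_0$-orthonormal normal frame in which $\xi=e_1$ and $U^t_{ij}(\widetilde x)$ is diagonal, so $G^{ij}(\widetilde x)$ is diagonal as well by Proposition \ref{ellipticconcave}.

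First I would record the critical-point relations $H_i(\widetilde x)=0$, which give
\[
\frac{U^t_{11i}}{U^t_{11}} = -\gamma'(u)u_i - a\sum_{l} u_l u_{li},
\]
and then develop $\sum_i G^{ii}H_{ii}\le 0$. Expanding the logarithm produces the main term $G^{ii}U^t_{ii11}/U^t_{11}$, the concavity-type ``bad'' term $-G^{ii}(U^t_{11i})^2/(U^t_{11})^2$, and lower-order pieces from $\gamma(u)$, $|\nabla u|^2$, and from $G^{ii}$ applied to the non-Hessian parts of $U^t$ (the $\Delta u\, g_0$, $|\nabla u|^2 g_0$, $du\otimes du$ and background tensor). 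The contribution of these last pieces is dealt with exactly as in the proof of Lemma \ref{estn}: the sign condition $\tau<1$ produces a positive multiple of $U^t_{11}\sum_i G^{ii}$, whose strict positivity is guaranteed by the lower bound $\sum_i G^{ii}\ge (n-k+1)/k$ from \eqref{100-1}.

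To analyze the main term I would differentiate the definition of $U^t$ twice in the $e_1$ direction and commute covariant derivatives. The Ricci identity introduces curvature contributions
\[
u_{ii11} = u_{11ii} + \mathrm{Rm}\ast \nabla^2 u + \mathrm{Rm}\ast du,
\]
so that $U^t_{ii11}$ equals the second $e_1$-derivative of $U^t_{11}$ up to terms that are either $O(|U^t_{11}|)$ or already controlled by the $C^1$ data. Applying \eqref{94} of Lemma \ref{C2-le} with $p=1$ then yields a lower bound on $G^{ii}U^t_{ii11}$ in which the quadratic correction $([\beta_l]_1)^2/\beta_l \cdot G_l$ is uniformly bounded, thanks to $\inf_M\alpha_l>0$, the bound $G_l\le C$ from Lemma \ref{lem2.6}, and the gradient estimate of Lemma \ref{estn}.

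The hard part will be absorbing the ``bad'' term $-G^{ii}(U^t_{11i})^2/(U^t_{11})^2$ together with the cross terms $G^{ii}u_iU^t_{11i}$ that surface when the critical-point identity is substituted into the derivatives of $|\nabla u|^2$ and $\gamma(u)$. A standard way is the Cauchy--Schwarz split
\[
\bigl|G^{ii}u_i U^t_{11i}\bigr| \le \frac{\varepsilon}{U^t_{11}}G^{ii}(U^t_{11i})^2 + \frac{U^t_{11}}{\varepsilon}G^{ii}u_i^2
\]
with $\varepsilon$ small, after which the $(U^t_{11i})^2$ terms are absorbed while the surplus $U^t_{11}G^{ii}u_i^2$ is dominated by the positive $a\gamma'\bigl(\frac{1-\tau}{n-2}+\frac{2-\tau}{2}\bigr)U^t_{11}\sum_iG^{ii}$ once $a$ is taken large. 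The resulting inequality at $\widetilde x$ forces $U^t_{11}(\widetilde x)\le C$. Combined with the equation itself, which bounds the smallest eigenvalue of $U^t$ from below once the others are bounded from above, this yields $\sup_M|\nabla^2 u|\le C$.
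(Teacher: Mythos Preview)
Your approach differs from the paper's in a significant way: the paper uses the \emph{trace-based} test function $H=\Delta u+\mu|\nabla u|^{2}$ (no logarithm, no $\gamma$), contracts the inequality $H_{ii}\le 0$ with the twisted operator $G^{ii}+\tfrac{1-\tau}{n-2}\sum_{p}G^{pp}$, commutes derivatives, and then feeds in \eqref{99} and \eqref{94}. The good quadratic term $\bigl[\tfrac{2\mu(1-\tau)}{n-2}-(2-\tau)\bigr](\Delta u)^{2}\sum_{i}G^{ii}$ comes from $2\mu\sum_{k}u_{ki}^{2}$ paired with the $\tfrac{1-\tau}{n-2}\sum_{p}G^{pp}$ piece, and choosing $\mu>\tfrac{(2-\tau)(n-2)}{2(1-\tau)}$ closes the estimate. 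The point is that with the trace no ``third-derivative-squared'' term ever appears.

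Your eigenvalue-based sketch has a genuine gap at the absorption step. By invoking \eqref{94} for $p=1$ you have already thrown away the full concavity contribution $-G^{ij,rs}U^{t}_{ij1}U^{t}_{rs1}\ge 0$, which in eigenvalue-type $C^{2}$ arguments is exactly the device that controls $-G^{ii}(U^{t}_{11i})^{2}/(U^{t}_{11})^{2}$. The Cauchy--Schwarz split you propose bounds some cross terms $G^{ii}u_{i}U^{t}_{11i}$ (whose provenance in your expansion is unclear) but does nothing to the bad term itself. If instead you substitute the critical-point identity $U^{t}_{11i}/U^{t}_{11}=-\gamma'u_{i}-a\sum_{l}u_{l}u_{li}$ into the bad term, then in the $U^{t}$-diagonal frame (where $u_{li}=O(1)$ for $l\ne i$) you pick up $-a^{2}G^{ii}u_{i}^{2}u_{ii}^{2}$, which must be beaten by the good $+aG^{ii}u_{ii}^{2}$ from $\partial_{i}^{2}\bigl(\tfrac{a}{2}|\nabla u|^{2}\bigr)$; this forces $a<|\nabla u|_{\infty}^{-2}$, in direct conflict with your later requirement that $a$ be large. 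A second issue: the positive term you quote, $a\gamma'\bigl(\tfrac{1-\tau}{n-2}+\tfrac{2-\tau}{2}\bigr)U^{t}_{11}\sum_{i}G^{ii}$, does not arise from contracting with bare $G^{ii}$; it requires the twisted operator $G^{ii}+\tfrac{1-\tau}{n-2}\sum_{p}G^{pp}$, as in the paper and in Lemma~\ref{estn}. The cleanest fix is simply to follow the paper's trace function; if you insist on the eigenvalue route, drop the $\log$ (use $U^{t}(\xi,\xi)$ directly, which removes the bad term) and work with the twisted operator throughout.
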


\begin{proof}
Since $\lambda(U^t)\in\Gamma_2$, we have
\[|U^t_{ij}|\le C tr U^t.\]
Therefore,
\begin{equation}
|u_{ij}|\le C(\Delta u+1).\label{96}
\end{equation}
So we only estimate $\Delta u$. Thus, we take the auxiliary function
$$H(x)=(\Delta u+\mu|\nabla u|^2),$$
where $\mu$ is a positive constant which will be chosen later.
Assume $x_0$ is the maximum point of $H$. After an appropriate
choice of the normal frame at $x_0$, we further assume $U^{t}_{ij}$
and hence $G^{ij}$ is diagonal at the point $x_0$. Then we have at
$x_0,$
\begin{equation}\label{97}
H_i(x_0)=\sum_{k}(u_{kki}+2\mu u_ku_{ki})=0,
\end{equation}
and
\begin{equation}\label{9701}
H_{ii}(x_0)=\sum_{k}\big(u_{kkii}+ 2\mu u_k u_{kii} +2\mu
u_{ki}^2\big)\leq0.
\end{equation}
From the positivity of $G^{ij}$ and (\ref{9701}), we arrive at $x_0$
\begin{eqnarray}
0&\ge&\sum_i\bigg( {G}^{ii} +\frac{1-\tau}{n-2}\sum_{p}
G^{pp}g_0^{ii}\bigg)H_{ii}(x)\notag\\&\ge&\sum_i\bigg( {G}^{ii}
+\frac{1-\tau}{n-2}\sum_{p}
G^{pp}g_0^{ii}\bigg)\sum_{k}\big(u_{kkii}+ 2\mu u_k u_{kii} +2\mu
u_{ki}^2\big)\notag
\\&\ge&\sum_i\bigg( {G}^{ii} +\frac{1-\tau}{n-2}\sum_{p}
G^{pp}g_0^{ii}\bigg)\sum_{k}\big(u_{iikk}+2\mu u_ku_{iik}+2\mu
u_{ki}^2-C\Delta u\big),\notag\allowdisplaybreaks
\end{eqnarray}
where we use Ricci identity to get the last inequality. In view of
(\ref{96}), we may assume $\Delta u$ is large enough. Thus it
follows from the definition of $U^t$ and (\ref{97}) that
\begin{eqnarray}
0&\ge&\sum_iG^{ii}\sum_{p}\bigg(U^{t}_{iipp}+(u_i^2)_{pp}-\Big[\frac{2-\tau}{2}|\nabla
u|^2
\Big]_{pp}\notag\allowdisplaybreaks\\
&&+2\mu u_p\Big(U_{iip}+(u_i^2)_p-\Big[\frac{2-\tau}{2}|\nabla
u|^2\Big]_p\Big)+ 2\mu
u_{pi}^2+\frac{2\mu(1-\tau)}{n-2}\sum_{l}u^2_{lp}\bigg)
\notag\allowdisplaybreaks\\
&&-C\sum_{i}G^{ii}\Delta u\notag\allowdisplaybreaks\\
&\ge&\sum_iG^{ii}\sum_{p}\bigg(U^{t}_{iipp}+2(-2\mu
u_iu_{ip}u_{p}+u^2_{ip})
-(2-\tau)\sum_{l}\big({-2\mu u_lu_{lp}u_p+u_{lp}^2}\big)\notag\allowdisplaybreaks\\
&&+2\mu
u_p\Big[(U^t_{iip}+2u_iu_{ip}-(2-\tau)\sum_{l}u_lu_{lp}\Big]+2\mu
u_{pi}^2 +\frac{2\mu(1-\tau)}{n-2}\sum_{l}u^2_{lp}\bigg)
\notag\allowdisplaybreaks\\&&-C\sum_iG^{ii}(1+\Delta u)\notag\allowdisplaybreaks\\
&\ge&\bigg(\Big[\frac{2\mu(1-\tau)}{n-2}-2+\tau\Big](\Delta
u)^2-C\Delta u-C\bigg)\sum_{i}G^{ii}
\notag\allowdisplaybreaks\\&&+\sum_{i,p}G^{ii}U^t_{iipp}+2\mu
\sum_{i,p}u_pG^{ii}U^t_{iip}.\notag
\end{eqnarray}
Then using (\ref{99}) and (\ref{94}), we deduce that
\begin{eqnarray}\label{Chen}
0&\ge&\bigg(\Big[\frac{2\mu(1-\tau)}{n-2}-(2-\tau)\Big](\Delta
u)^2-C\Delta
u-C\bigg)\sum_{i}G^{ii}\\&&+C\Big(\sum_{l=0}^{k-2}G_l+1\Big)\big(\Delta
u+1\big).
\end{eqnarray}
Since $\tau<1$, we may choose $\mu$ large to dominate the $(2-\tau)$
term (this is the point where the assumption $\tau<1$ is crucial).
Choosing $\mu>\frac{(2-\tau)(n-2)}{2(1-\tau)}$ and using the
inequality \eqref{G_l-B}, we conclude at $x_0$ from the inequality
above
\begin{eqnarray*}
C\geq |\Delta u|^2.
\end{eqnarray*}
So, we complete the proof.
\end{proof}

\subsection{Proof of Theorem \ref{main2}} \

In this section, we use the degree theory for nonlinear elliptic
equation developed in \cite{Li89} to prove Theorem \ref{main2}.
After establishing the  a priori estimates Lemma \ref{estn0}, Lemma
\ref{estn}, Lemma \ref{estn2}, we know that the equation
\eqref{t-u-Eq} is uniformly elliptic if we notice \eqref{G_l-B} for
the case $l=0$
\begin{eqnarray}\label{l=0}
\sigma_{k-1}(U^t)\ge C>0.
\end{eqnarray}
From Evans-Krylov estimates \cite{Eva82, Kry83}, and Schauder
estimates, we have
\begin{eqnarray}\label{C2++}
|u|_{C^{4,\delta}(M)}\leq C
\end{eqnarray}
for any solution $u$ to the equation \eqref{t-u-Eq} with
$\lambda(U^t)\in \Gamma_{k-1} $, where $0<\delta<1$.  Recalling the
equation \eqref{t-u-Eq}
\begin{eqnarray*}
G(U^t)&:=&\frac{\sigma_k\big(U^t\big)}
{\sigma_{k-1}\big(U^t\big)}\notag\allowdisplaybreaks-\frac{\sum_{l=0}^{k-2}\Big([(1-t)c+t\alpha_l(x)]\exp[{2(k-l)u}]\sigma_{l}\big(U^t\big)\Big)}
{\sigma_{k-1}\big(U^t\big)}\notag\allowdisplaybreaks\\
&=&-t\alpha \exp({2u}),
\end{eqnarray*}
where $ t\in [0, 1], e=(1,\cdots, 1)$,
\begin{eqnarray}\label{c}
c=\frac{\sigma_{k}(e)}{\sum_{l=0}^{k-2}\sigma_l(e)},
\end{eqnarray}
and
\[U^t=\nabla^2 u +\frac{1-\tau}{n-2} \triangle u g_0 +
\frac{2-\tau}{2}|\nabla u|^2 g_0- du\otimes du
-tA^{\tau}_{g_0}+(1-t)g_0.\] Then we consider a family of the
mappings for $t \in [0, 1]$
$$F(.; t):
C_{0}^{4, \delta}(M)\rightarrow C^{2, \delta}(M),$$ which is defined
by
\begin{eqnarray*}
F(u; t):=G(U^t)+t\alpha\exp(2u),
\end{eqnarray*}
where
\begin{eqnarray*}
C_{0}^{4, \delta}(M)=\{u \in C^{4,\delta}(M): \lambda(U^t)
\in\Gamma_{k-1} \}
\end{eqnarray*}
is an open subset of $C^{4,\delta}(M)$. Let $$\mathcal{O}_R=\{u \in
C_{0}^{4, \delta}(M): |u|_{C^{4,\delta}(M)}<R\},$$ which clearly is
also an open subset of $C^{4,\delta}(M)$. Moreover, if $R$ is
sufficiently large, $F(u; t)=0$ has no solution on $\partial
\mathcal{O}_R$ by \eqref{l=0} and the a prior estimate established
in \eqref{C2++}. Therefore the degree $\deg(F(.; t), \mathcal{O}_R,
0)$ is well-defined for $0\leq t\leq 1$. Using the homotopic
invariance of the degree, we have
\begin{eqnarray*}
\deg(F(.; 1), \mathcal{O}_R, 0)=\deg(F(.; 0), \mathcal{O}_R, 0).
\end{eqnarray*}
When $t=0$, (\ref{t-u-Eq}) becomes
\begin{eqnarray}
\sigma_k\big(U^0\big) -c
\sum_{l=0}^{k-2}\exp[{2(k-l)u}]\sigma_{l}\big(U^0\big)=0\label{107}
\end{eqnarray}
with
\[U^0=\nabla^2 u +\frac{1-\tau}{n-2} \triangle u g_0 +
\frac{2-\tau}{2}|\nabla u|^2 g_0- du\otimes du+g_0.\]

\begin{lemma}\label{t=0}
$u=0$ is the unique solution for (\ref{107}).
\end{lemma}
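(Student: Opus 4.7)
The plan is to prove uniqueness by a maximum/minimum principle argument, exploiting the structure of the constant $c$ together with the comparison inequalities for $\sigma_k/\sigma_{k-1}$ and $\sigma_l/\sigma_{k-1}$ provided by Lemma \ref{A+B}.

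First, I would check that $u\equiv 0$ is indeed a solution. When $u\equiv 0$, we have $\nabla u=0$, $\nabla^2 u=0$, so $U^0=g_0$ and $\lambda(U^0)=e=(1,\dots,1)$. Substituting into \eqref{107} and using the definition $c=\sigma_k(e)/\sum_{l=0}^{k-2}\sigma_l(e)$ from \eqref{c} immediately verifies the equation.

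For uniqueness, suppose $u$ is any admissible solution ($\lambda(U^0)\in\Gamma_{k-1}$) and let $x_1$ be a maximum point. At $x_1$ one has $\nabla u(x_1)=0$ and $\nabla^2 u(x_1)\leq 0$, and since $\tau<1$ the coefficient $(1-\tau)/(n-2)\geq 0$ makes $\Delta u(x_1)\,g_0$ a nonpositive contribution. Therefore
\[
U^0(x_1)-g_0=\nabla^2 u(x_1)+\frac{1-\tau}{n-2}\Delta u(x_1)\,g_0
\]
is negative semi-definite. Applying Lemma \ref{A+B}(2) with $A=U^0(x_1)-g_0$ and $B=g_0\in\Gamma_{k-1}$ (and $A+B=U^0(x_1)\in\Gamma_{k-1}$ by admissibility) yields
\[
\frac{\sigma_k(U^0)}{\sigma_{k-1}(U^0)}(x_1)\leq\frac{\sigma_k(e)}{\sigma_{k-1}(e)},\qquad
\frac{\sigma_l(U^0)}{\sigma_{k-1}(U^0)}(x_1)\geq\frac{\sigma_l(e)}{\sigma_{k-1}(e)},\quad 0\leq l\leq k-2.
\]
Dividing \eqref{107} by $\sigma_{k-1}(U^0)$ and evaluating at $x_1$, these two comparisons together with the identity $\sigma_k(e)=c\sum_{l=0}^{k-2}\sigma_l(e)$ give
\[
c\sum_{l=0}^{k-2}\sigma_l(e)=\sigma_k(e)\geq c\sum_{l=0}^{k-2}e^{2(k-l)u(x_1)}\sigma_l(e),
\]
i.e.\ $\sum_{l=0}^{k-2}\sigma_l(e)\bigl(1-e^{2(k-l)u(x_1)}\bigr)\geq 0$. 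Since $\sigma_l(e)>0$ and $2(k-l)>0$ for $0\leq l\leq k-2$, this forces $u(x_1)\leq 0$, hence $\max_M u\leq 0$.

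An entirely symmetric argument at a minimum point $x_2$, this time using Lemma \ref{A+B}(1) with $A=U^0(x_2)-g_0\geq 0$, gives the reversed chain of inequalities and forces $u(x_2)\geq 0$, hence $\min_M u\geq 0$. Combining the two bounds yields $u\equiv 0$. The only mild subtlety worth flagging is the admissibility check that both $g_0$ and $U^0$ lie in $\Gamma_{k-1}$ so that Lemma \ref{A+B} can be applied—this is automatic from the standing hypothesis $\lambda(U^0)\in\Gamma_{k-1}$. No further PDE machinery (linearization, Evans–Krylov, etc.) is needed; the proof is a pure pointwise comparison powered by the concavity-based inequalities of Lemma \ref{A+B} and the specific choice of $c$.
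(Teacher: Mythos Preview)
Your proof is correct and follows essentially the same maximum/minimum principle argument as the paper: compare $U^0$ with $g_0$ at extremal points via the concavity inequalities of Lemma~\ref{A+B}, then use the specific choice of $c$ to squeeze $u$ to zero. Your write-up is in fact cleaner and more explicit than the paper's own proof, which contains apparent sign/label slips (the roles of the maximum and minimum points appear to be interchanged, and the conclusions are stated as $u(x)\geq 1$, $u(y)\leq 1$ rather than $\max_M u\leq 0$, $\min_M u\geq 0$); your version gets the directions right and makes the invocation of Lemma~\ref{A+B} explicit.
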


\begin{proof}
Assume $x$ and $y$ are the maximum and minimum points of $u$
respectively. Then we obtain by (\ref{107}),
\[\sigma_k(e)\leq c
\sum_{l=0}^{k-2}\exp[{2(k-l)u(x)}]\sigma_{l}(e),\] which implies by
the definition \eqref{l=0} of $c$
\begin{eqnarray*}
u(x)\geq 1.
\end{eqnarray*}
Similarly, we have
\[\sigma_k(e)\geq c
\sum_{l=0}^{k-2}\exp[{2(k-l)u(x)}]\sigma_{l}(e),\] which implies
\begin{eqnarray*}
u(y)\leq 1.
\end{eqnarray*}
Thus $u\equiv 0$.
\end{proof}

Lemma \ref{t=0} shows that $u=0$ is the unique solution to the
equation \eqref{t-u-Eq} for $t=0$. Let $u(x, s)$ be the variation of
$u=0$ such that $u'_{s}=\phi$ at $s=0$. Then
\begin{eqnarray*}
\delta_{\varphi}F(0; 0)=a_{ij}\phi_{ij}+\textrm{1st\ derivatives\ in
\
}\phi-c\frac{\sum_{l=0}^{k-2}2(k-l)\sigma_l(e)}{\sigma_{k-1}(e)}\phi,
\end{eqnarray*}
where $a_{ij}$ is a positive definite matrix and $\delta F(0; 0)$ is
the linearized operator of $F$ at $u=0$. Clearly, $\delta F(0; 0)$
is an invertible operator. Therefore,
\begin{eqnarray*}
\deg(F(.; 1), \mathcal{O}_R; 0)=\deg(F(.; 0), \mathcal{O}_R, 0)=\pm
1.
\end{eqnarray*}
So, we obtain a solution at $t=1$. This completes the proof of
Theorem \ref{main2}.

\bigskip

\end{document}